\newtheorem{lemma}{Lemma}[section]
\newtheorem{theorem}{Theorem}[section]
\newtheorem{proposition}{Proposition}[section]
\newtheorem{definition}{Definition}[section]
\newtheorem{corollary}{Corollary}[section]
\newtheorem{remark}{Remark}[section]
\numberwithin{equation}{section}
\title{Complete Monotonicity of the function involving derivatives of Barnes G-function}
\author{Deepshikha Mishra$^\dagger$}
\address{$^\dagger$Department of Mathematics\\ Indian Institute of Technology, Roorkee-247667, Uttarakhand, India}
\email{deepshikha\_m@ma.iitr.ac.in, deepshikhamishraalld@gmail.com}
\author[A. Swaminathan]{A. Swaminathan\, $^{\#\ddagger}$}{\thanks{$^{\#}$Corresponding author}}
\address{$^\ddagger$Department of Mathematics\\ Indian Institute of Technology, Roorkee-247667, Uttarakhand, India}
\email{a.swaminathan@ma.iitr.ac.in, mathswami@gmail.com}
\begin{document}
	\keywords{gamma function; Barnes G-function; completely monotonic; multiple psi function; digamma function; double gamma function}
	
	\subjclass[2020] {33B15, 26D07, 26A48}
	
\begin{abstract}
In this manuscript, we present the complete monotonicity of functions defined in terms of the poly-double gamma function
\begin{align*}
	\psi_2^{(n)}(x) = (-1)^{n+1} n! \sum_{k=0}^{\infty} \dfrac{(1+k)}{(x+k)^{n+1}}, \quad x > 0, \ n\geq 2.
\end{align*}
Consequently, we derive bounds for the ratio involving $\psi_2^{(n)}(x)$ and apply these bounds to establish the convexity, subadditivity and superadditivity of $\psi_2^{(n)}(x)$. In the process, various fundamental properties of $\psi_2^{(n)}(x)$ are established, including recurrence relations, integral representations, asymptotic expansions, complete monotonicity, and related inequalities. Graphical illustrations are provided to support the theoretical results.
\end{abstract}

\maketitle
\markboth{Deepshikha Mishra and A. Swaminathan}{Complete Monotonicity of the function involving derivatives of Barnes G-function}


\section{Introduction and preliminaries}\label{Introduction and preliminaries}
The Barnes G-function, denoted as $G(z)$, was first introduced by Barnes in \cite{Barnes_1899_genesis of double gamma}. It satisfies the recurrence relation \cite{Barnes_1899_genesis of double gamma}
\begin{align*}
G(z+1) &= \Gamma(z) G(z), \quad z \in \mathbb{C},
\end{align*}
with
\begin{align*}
G(1) = 1,
\end{align*}
where $ \Gamma(z) $ denotes the classical Euler gamma function. The Barnes G-function, commonly referred to as the double gamma function, corresponds to the product of Euler gamma functions given by
\begin{align*}
G(n+1) = \prod_{k=1}^{n-1} \Gamma(k+1), \quad n \in \mathbb{N}.
\end{align*}
The Barnes G-function, $G(z)$, also possesses a Weierstrass representation \cite{Barnes_1899_genesis of double gamma, Ferreira_2001_asym of double gamma}, given by
\begin{align}\label{barnes g function definition}
G(z+1) &= (2 \pi)^{z/2} e^{-\left((1+\gamma)z^2+z\right)/2} \prod_{k=1}^{\infty} \left[ \left(1 + \dfrac{z}{k} \right)^k e^{-z + z^2/2k} \right], \quad \forall \ z\in \mathbb{C},
\end{align}
where $ \gamma \approx 0.5772 $ represents the Euler constant. Later, $\Gamma_2(z)=\dfrac{1}{G(z)}$ was generalized to the multiple gamma function $\Gamma_p(z)$, which is defined by the following recurrence and functional equations \cite{Choi_multiple gamma_amc_2003}
\begin{align*}
&\Gamma_{p+1}(z+1) = \dfrac{\Gamma_{p+1}(z)}{\Gamma_p(z)}, \quad z \in \mathbb{C}, \quad p \in \mathbb{N},\\
&\Gamma_1(z) = \Gamma(z), \quad \Gamma_p(1) = 1,
\end{align*}
along with the convexity condition
\begin{align*}
	&(-1)^{p+1} \dfrac{d^{p+1}}{dx^{p+1}} \log \Gamma_p(x) \geq 0, \quad x > 0.
\end{align*}
The multiple gamma function has applications in various fields, including mathematical physics \cite{Quine_1996_gamma2 on mathematical phy}, analytic number theory \cite{Adamchik_1998_zeta_analysis}, and probability theory \cite{Nikeghbali_2009_barnes g in probability}, among others. In this manuscript, we focus on the double gamma function and its logarithmic derivatives.

Let $\psi^{(n)}_p(x) $ denotes the $(n+1)$th logarithmic derivative of the multiple gamma function $ \Gamma_p(x) $, we call it as the \textit{poly-multiple gamma function}. For any $ n, p \in \mathbb{N}$ with $ n \geq p $, $ \psi^{(n)}_p(x) $ is defined as \cite{Das_2018_cr acad}
\begin{align*}
\psi_p^{(n)}(x)= (-1)^{n+1} \sum_{k=0}^{\infty} \binom{p+k-1}{p-1} \dfrac{n!}{(x+k)^{n+1}}, \quad x > 0.
\end{align*}
For $ p=2 $, we obtain  $\psi_2^{(n)}(x)$ such that
\begin{align}\label{poly-double gamma function series form}
\psi_2^{(n)}(x)=  (-1)^{(n+1)} n! \sum_{k=0}^{\infty} \dfrac{(1+k)}{(x+k)^{n+1}}, \quad x > 0, \ n\geq 2.
\end{align}
We refer to $\psi_2^{(n)}(x)$ as the \textit{poly-double gamma function}. This function is also related to the polygamma function $\psi^{(n)}(x)$ defined as
\begin{align*}
\psi^{(n)}(x) = (-1)^{n+1} n! \sum_{k=0}^{\infty} \frac{1}{(k+x)^{n+1}}, \quad n\geq 1, \ \ x>0
\end{align*}
by the following relation
\begin{align*}
\psi_2^{(n)}(x) = -n \psi^{(n-1)}(x) + (1 - x) \psi^{(n)}(x), \quad n\geq 2, \ x>0.
\end{align*}
Since $\psi_2^{(n)}(x)$ is defined for $ n \geq 2 $, we can not obtain $\psi_2(x)$ directly from \eqref{poly-double gamma function series form}. To define $\psi_2(x)$, we take the logarithmic derivative of \eqref{barnes g function definition} which leads to
\begin{align*}
\psi_2(x) &= -\dfrac{1}{2} \log(2 \pi) + (1+\gamma)x +\dfrac{1}{2}- \sum_{k=0}^{\infty}  \dfrac{(x-1)^2}{(k+1)(x+k)}, \quad x > 0.
\end{align*}
We refer to $\psi_2(x)$ as the \textit{di-double gamma function}. Recently, various properties of the poly-multiple gamma function, including monotonicity, convexity, and subadditivity, are established, along with some inequalities and zeros-related results \cite{Das_2018_cr acad, Mezo_2017_Zeros of gamma2 itsf}.


Now we present some definitions and theorems that will be used later. We begin with Lagrange's identity, which establishes a connection between the product of two sums of squares and the sum of pairwise products. The Lagrange’s inequality \cite{Mitrinovic_1970_Analytic ineq} for any two real sequences $\{a_n\}$ and $\{b_n\}$ is given as
\begin{align}\label{lagrange identity}
\left( \sum_{i=1}^{n} a_i^2 \right) \left( \sum_{i=1}^{n} b_i^2 \right) - \left( \sum_{i=1}^{n} a_i b_i \right)^2 = \sum_{1 \leq i < j \leq n} (a_i b_j - a_j b_i)^2.
\end{align}
This identity also provides a refinement of the Cauchy–Schwarz inequality.
\begin{definition}\cite{Donoghue_monotone matrix func}\label{cmf definition}
Let $g(x)$ be a real-valued and $C^{\infty}$ function on the half-axis $x>0$. Then we say that $g(x)$ is completely monotone if for all $k \geq 0$
\begin{align*}
(-1)^k g^{(k)}(x) \geq 0
\end{align*}
on the half-axis.
\end{definition}
Note that, the completely monotone functions are also characterized by the Bernstein theorem, as stated below.
\begin{theorem}\cite{Schilling_2012_bernstein func}\label{Bernstein theorem}
A real-valued function $g(x)$ is completely monotonic in half-axis $x>0$ if, and only if, it can be represented as
\begin{align}\label{bernstein eq}
g(x) = \int_{[0, \infty)} e^{-x t}  d\mu(t),
\end{align}
on half axis $x>0$.
\end{theorem}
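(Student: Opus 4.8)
The plan is to prove the two implications separately, treating sufficiency as a routine differentiation and reserving the real work for necessity. For sufficiency, suppose $g(x) = \int_{[0,\infty)} e^{-xt}\, d\mu(t)$ with $\mu$ a nonnegative measure for which the integral converges on $x>0$. Fixing any $x_0 > 0$ and restricting to $x \geq x_0$, the integrand $t \mapsto t^k e^{-xt}$ is dominated, uniformly in $x \geq x_0$, by a $\mu$-integrable function, so differentiation under the integral sign is justified by the standard dominated-convergence criterion. This gives $g^{(k)}(x) = \int_{[0,\infty)} (-t)^k e^{-xt}\, d\mu(t)$, whence
\[
(-1)^k g^{(k)}(x) = \int_{[0,\infty)} t^k e^{-xt}\, d\mu(t) \geq 0,
\]
so $g$ is completely monotone in the sense of Definition \ref{cmf definition}.

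For necessity, the plan is to recover $\mu$ as a vague limit of explicit approximating measures built from $g$ itself, which is Widder's real inversion of the Laplace transform. Concretely, for each integer $n \geq 1$ I would introduce the Post--Widder density
\[
\gamma_n(t) = \frac{(-1)^n}{n!}\left(\frac{n}{t}\right)^{n+1} g^{(n)}\!\left(\frac{n}{t}\right), \qquad t > 0,
\]
and the associated measure $d\mu_n(t) = \gamma_n(t)\, dt$; its nonnegativity is precisely the hypothesis $(-1)^n g^{(n)} \geq 0$. An alternative, equally valid route is to fix $a,h>0$, observe that the sequence $c_k = g(a+kh)$ is a completely monotone sequence, invoke Hausdorff's moment theorem to write $c_k = \int_0^1 s^k\, d\beta(s)$, substitute $s = e^{-ht}$, and then reconcile the measures obtained for different $a,h$; but I will pursue the Widder construction as the cleaner one to describe.

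The remaining steps are then: (i) show that the masses $\mu_n$ assign to compact subsets of $(0,\infty)$ are uniformly bounded and that no mass escapes toward $t=0$ or $t=+\infty$, using the structural consequences of complete monotonicity (that $g$ is decreasing, convex, and has a finite limit as $x \to +\infty$); (ii) apply Helly's selection theorem to the associated distribution functions to extract a subsequence $\mu_{n_j}$ converging vaguely to a nonnegative measure $\mu$; (iii) verify the reproducing identity, the heart of which is the elementary kernel computation showing that Widder's operator applied to a pure exponential $e^{-s\cdot}$ converges to $e^{-sx}$, together with the concentration of $\gamma_n$; passing to the limit along the subsequence yields $g(x) = \int_{[0,\infty)} e^{-xt}\, d\mu(t)$. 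Uniqueness of $\mu$ then follows from injectivity of the Laplace transform.

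I expect the main obstacle to be the tightness and limit-interchange in steps (i)--(iii): one must guarantee that the approximating measures neither blow up nor leak mass at the endpoints, so that the vague limit is a genuine representing measure and the Laplace integral is continuous under the passage to the limit. This is exactly where the full force of the hypothesis $(-1)^k g^{(k)} \geq 0$ for \emph{all} $k$ is consumed, since it supplies the uniform control on the derivatives $g^{(n)}$ needed to dominate the kernel; the purely algebraic verification that the kernel reproduces exponentials is comparatively routine.
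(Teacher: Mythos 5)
The paper itself offers no proof of this statement: it is the classical Bernstein--Hausdorff--Widder theorem, quoted with a citation to the monograph of Schilling, Song and Vondra\v{c}ek, so there is no internal argument to compare yours against and your attempt must be judged on its own merits. Your sufficiency half is correct and essentially complete: for $x\geq x_0>0$ one has $t^k e^{-xt}\leq C_k e^{-x_0 t/2}$ with $C_k=\sup_{t>0}t^k e^{-x_0t/2}<\infty$, and $e^{-x_0t/2}$ is $\mu$-integrable because the representation is assumed to converge at $x_0/2$; differentiation under the integral then gives $(-1)^k g^{(k)}(x)=\int_{[0,\infty)}t^k e^{-xt}\,d\mu(t)\geq 0$.

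The necessity half, however, is an outline whose unexecuted steps carry all the weight, and the step you call the ``heart'' of the argument is circular as described. Knowing that the Post--Widder operator reproduces pure exponentials, together with ``concentration of $\gamma_n$,'' only yields $\int_0^\infty e^{-xt}\gamma_n(t)\,dt\to g(x)$ once you already know that $g$ is a superposition of exponentials --- which is the conclusion of the theorem. The non-circular versions of this proof rest instead on an exact identity valid for every completely monotone $g$, namely Taylor's formula with integral remainder,
\[
g(x)=\sum_{k=0}^{n-1}\frac{(-1)^k g^{(k)}(a)}{k!}(a-x)^k+\frac{(-1)^n}{(n-1)!}\int_x^a (u-x)^{n-1}g^{(n)}(u)\,du,\qquad 0<x<a,
\]
in which every term is nonnegative; after the substitution $t=n/u$ the remainder is an integral of the kernel $(1-xt/n)_+^{n-1}\to e^{-xt}$ against (essentially) your measures $\mu_n$, and it is this identity, not the action of the operator on exponentials, that lets one pass to the limit. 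Two further points need real work. First, Helly/vague convergence only controls integrals of compactly supported continuous test functions, and $e^{-xt}$ is not one; you need a uniform bound such as $\sup_n\int_{[0,\infty)}e^{-x_0t}\,d\mu_n(t)<\infty$ to make the tails uniformly negligible. Once you have it, mass escaping to $t=+\infty$ is harmless (its contribution to the integral vanishes for fixed $x>0$) and an atom at $t=0$ is perfectly admissible --- it equals $\lim_{x\to\infty}g(x)$ --- so ``no mass escapes toward $t=0$ or $t=+\infty$'' is not the right formulation of what must be proved. Second, $g(0+)$ may be infinite (e.g.\ $g(x)=1/x$), so the construction should first be run for $g(\cdot+\varepsilon)$ and $\varepsilon$ removed afterwards via uniqueness of Laplace transforms. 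Finally, the Hausdorff-moment route you mention and set aside (completely monotone sequences $g(a+kh)$, Hausdorff's theorem, the substitution $s=e^{-ht}$) is in fact the cleaner way to avoid the circularity, and is the classical proof in Widder's book.
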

\begin{remark}\label{scm remark}
If the measure $\mu$ defined in the integral \eqref{bernstein eq} is absolutely continuous, i.e., if
\begin{align*}
d\mu(t) = m(t) dt,
\end{align*}
where $m(t)$ is a non-negative, non-decreasing function, then the function $g(x)$ exhibits strong complete monotonicity rather than complete monotonicity.
\end{remark}

Complete monotone functions are closely connected to Pick functions, Bernstein functions, and Stieltjes functions \cite{Schilling_2012_bernstein func}, and play a key role in classifying various classes of special functions \cite{Berg_2024_cbf related to gamma, Berg_2001_cm related to gamma, Das_swami_2017_Pick function, Pedresan_2009_cm of gamma2 jmaa, Pedersen_2003_Pickfunc gamma2}. One of the key results on complete monotonicity involving the polygamma function was established in \cite{Alzer_1998_Inequality polygamma_siam}, while similar kinds of generalizations have been studied recently in \cite{Liang_cm of polygamma_JIA, Zhang_2020_cm of k-polygamma_JIA}. This naturally raises the question of whether such monotonicity properties can be extended to functions involving higher-order gamma functions, particularly the poly-double gamma function. The primary aim of this manuscript is to explore this aspect of $\psi_2^{(n)}(x)$. To address this, we first discuss some fundamental results related to the polygamma function.


The polygamma function $\psi^{(n)}(x)$ satisfies the following recurrence relation \cite{Abramowitz_handbook special func}
\begin{align} \label{recurrence relation for polygamma}
\psi^{(n)}(x+1) = \psi^{(n)}(x) + (-1)^n \dfrac{n!}{x^{n+1}}, \quad n \in \mathbb{N}\cup \{0\}, \quad x > 0.
\end{align}
In addition to the recurrence relation \eqref{recurrence relation for polygamma}, the asymptotic nature of the polygamma function as $x \to \infty$ is given by \cite{Abramowitz_handbook special func}
\begin{align}\label{asymptotic expansion for polygamma}
\psi^{(n)}(x) \sim (-1)^{n-1} \left[\dfrac{(n-1)!}{x^n}+ \dfrac {n!}{2x^{n+1}}+\sum_{k=1}^{\infty} B_{2k} \dfrac{(2k+n-1)!}{(2k)! x^{2k+n}} \right],
\end{align}
where $B_{2k}$ are Bernoulli numbers and $n\in \mathbb{N}$. As a result of \eqref{asymptotic expansion for polygamma}, we obtain
\begin{align}\label{asymptotic expansion for polygamma limit form}
\lim_{x \to \infty} x^n \psi^{(n)}(x) = (-1)^{n-1}(n-1)!.
\end{align}
The manuscript is organized as follows: we begin with Section \ref{some results} which examines fundamental properties of the poly-double gamma function, including recurrence relation, integral representation, asymptotic expansion, complete monotonicity and related inequalities. In Section \ref{complete monotonicity}, we state and prove the main theorem regarding the complete monotonicity of functions written in terms of the poly-double gamma function along with its consequences.

\section{Inequalities involving Poly-double gamma function}\label{some results}
In this section, we obtain an integral representation of $\psi_2^{(n)}(x)$ and, based on this representation, we examine its complete monotonicity along with various related inequalities, including Turan-type inequality. We also obtain the asymptotic expansion of $\psi_2^{(n)}(x)$ and determine the limiting value of $x^{n-1} \psi_2^{(n)}(x)$ by incorporating the recurrence relation for $\psi_2^{(n)}(x)$.

\begin{theorem}\label{poly double cm thm}
The poly-double gamma function $\psi_2^{(n)}(x)$ admits the following integral representation
\begin{align}\label{integral form of poly-double gamma function}
\psi_2^{(n)}(x) = (-1)^{n+1} \int_0^\infty e^{-xt} \dfrac{t^n}{(1 - e^{-t})^2} dt, \quad n \geq 2, \quad x > 0.
\end{align}
Moreover, for every integer $n \geq 2$, the function $(-1)^{n+1} \psi_2^{(n)}(x)$ is completely monotone on $(0, \infty)$ and, therefore, it is also convex.
\end{theorem}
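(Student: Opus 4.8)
The plan is to derive the integral representation directly from the series definition \eqref{poly-double gamma function series form} and then read off complete monotonicity from Bernstein's theorem (Theorem \ref{Bernstein theorem}). First I would replace each summand using the elementary Gamma-integral identity $\frac{1}{a^{n+1}} = \frac{1}{n!}\int_0^\infty t^n e^{-at}\,dt$, valid for $a>0$, taking $a = x+k$. After pulling the sign $(-1)^{n+1}$ out front, every remaining term $(1+k)\,t^n e^{-(x+k)t}$ is non-negative on $t>0$ for $x>0$ and $n\ge 2$, so Tonelli's theorem justifies interchanging the summation and integration. This yields $\psi_2^{(n)}(x) = (-1)^{n+1}\int_0^\infty t^n e^{-xt}\left(\sum_{k=0}^\infty (1+k)e^{-kt}\right)dt$.

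The next step is to evaluate the inner sum in closed form. Writing $q = e^{-t}\in(0,1)$ and splitting $\sum_{k\ge 0}(1+k)q^k = \sum_{k\ge 0}q^k + \sum_{k\ge 0}k\,q^k = \frac{1}{1-q}+\frac{q}{(1-q)^2} = \frac{1}{(1-q)^2}$, where the second series is obtained by differentiating the geometric series, one gets $\sum_{k=0}^\infty (1+k)e^{-kt} = (1-e^{-t})^{-2}$. Substituting this back gives precisely \eqref{integral form of poly-double gamma function}.

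For the complete monotonicity, the representation \eqref{integral form of poly-double gamma function} exhibits $(-1)^{n+1}\psi_2^{(n)}(x)$ as the Laplace transform of the density $m(t) = t^n/(1-e^{-t})^2$, which is non-negative on $(0,\infty)$ for $n\ge 2$. Hence Bernstein's theorem (Theorem \ref{Bernstein theorem}) immediately gives complete monotonicity; equivalently, one may differentiate under the integral sign $k$ times to obtain $(-1)^k\frac{d^k}{dx^k}\left[(-1)^{n+1}\psi_2^{(n)}(x)\right] = \int_0^\infty t^{k+n}e^{-xt}(1-e^{-t})^{-2}\,dt \ge 0$. Convexity is then automatic, since complete monotonicity forces the second derivative to be non-negative.

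Analytically the argument is routine, and the only genuine technical points are the justification of the term-by-term integration (handled by Tonelli, since the integrand is non-negative once the sign is extracted) and, if one prefers the direct verification, the legitimacy of repeated differentiation under the integral sign. The latter is handled by noting that the integrals $\int_0^\infty t^{k+n}e^{-xt}(1-e^{-t})^{-2}\,dt$ converge locally uniformly in $x$ on $(0,\infty)$: near $t=0$ the integrand behaves like $t^{k+n-2}$, which is integrable for $n\ge 2$, while the factor $e^{-xt}$ forces exponential decay as $t\to\infty$. I expect no substantive difficulty beyond these standard convergence checks.
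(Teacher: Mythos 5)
Your proof is correct, but your derivation of the integral representation takes a genuinely different route from the paper. The paper starts from Vignéras's integral formula for $\log \Gamma_2(x+1)$ (equation \eqref{integral form of log double gamma function}), differentiates it under the integral sign to get $\psi_2(x+1)$, differentiates $n$ more times, and finally shifts $x \to x-1$; this anchors the result in the classical double gamma literature and implicitly confirms that the series definition \eqref{poly-double gamma function series form} agrees with the logarithmic derivatives of $\Gamma_2$. You instead work directly from the series definition, expanding each term via $\frac{1}{(x+k)^{n+1}} = \frac{1}{n!}\int_0^\infty t^n e^{-(x+k)t}\,dt$, interchanging sum and integral by Tonelli, and summing $\sum_{k\ge 0}(1+k)e^{-kt} = (1-e^{-t})^{-2}$ in closed form — all of which is correct, more elementary, and self-contained (no citation to Vignéras needed), at the cost of not exhibiting the link back to $\log\Gamma_2$. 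The complete monotonicity step is identical in both treatments (Bernstein's theorem applied to the nonnegative density $t^n/(1-e^{-t})^2$), and your route to convexity is actually slightly cleaner: you read $g''\ge 0$ directly off the definition of complete monotonicity, whereas the paper passes through log-convexity first.
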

\begin{proof}
In \cite{Vigneras_1979_gamma2 integral}, the following integral form is given by
\newline
$
\displaystyle
\log \Gamma_2(x+1)
$
\begin{align}\label{integral form of log double gamma function}
= - \int_0^\infty \! \dfrac{e^{-t}}{t(1 - e^{-t})^2} \left( 1 - xt - \dfrac{x^2 t^2}{2} - e^{-xt} \right) dt + (1 + \gamma) \dfrac{x^2}{2} - \dfrac{3}{2} \log \pi, \quad x > -1.
\end{align}
Differentiating \eqref{integral form of log double gamma function} with respect to $x$ by applying Leibniz's rule for differentiation under the integral sign, and substituting $\dfrac{\Gamma_2^{'}(x+1)}{\Gamma_2(x+1)} = \psi_2(x+1)$, we obtain
\begin{align}\label{di double gamma function}
\psi_2(x+1) = \int_0^\infty \dfrac{ e^{-t}}{t(1 - e^{-t})^2} (t + xt^2 - te^{-xt}) dt + (1 + \gamma)x, \quad x > -1.
\end{align}
Differentiating \eqref{di double gamma function} with respect to $x$ upto $n$ times, and then replacing $x \to x-1$, gives \eqref{integral form of poly-double gamma function}.

Using \eqref{integral form of poly-double gamma function}, we can write
\begin{align}\label{poly double gamma in terms of lapl transform eq}
(-1)^{n+1} \psi_2^{(n)}(x) = \int_0^\infty e^{-xt} \dfrac{t^n}{(1 - e^{-t})^2} dt, \quad n \geq 2, \quad x > 0.
\end{align}
Since $\dfrac{t^n}{(1 - e^{-t})^2}$ is positive for all $t \in (0,\infty)$ and $n \geq 2$, Theorem \ref{Bernstein theorem} asserts the complete monotonicity of $(-1)^{n+1} \psi_2^{(n)}(x)$. Also, it is well known that complete monotonicity of a function implies its log-convexity \cite{Wimp_1981_cm in numerical}. Therefore, $(-1)^{n+1} \psi_2^{(n)}(x)$ is log-convex on $(0, \infty)$ for every $n \in \mathbb{N}$ with $n \geq 2$ and hence it is convex.
\end{proof}
The following graphical representation demonstrates the complete monotonic nature of $(-1)^{n+1} \psi_2^{(n)}(x)$ for $n \geq 2$ on $(0,\infty)$.
\begin{figure}[H]
	\footnotesize
	\stackunder[5pt]{\includegraphics[scale=0.9]{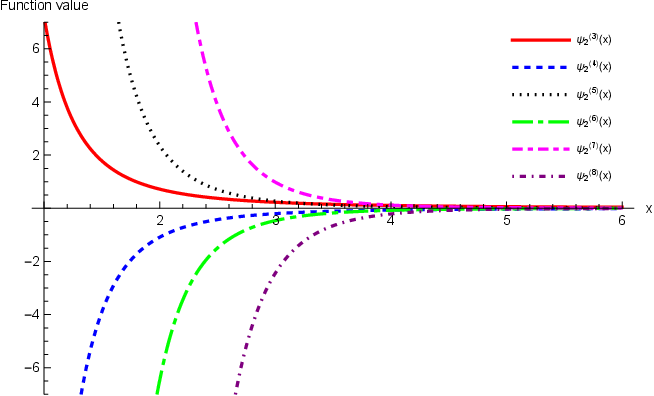}}{Graph of $\psi_2^{(3)}(x)$ up to its $5$th derivative}
	\caption{Complete monotonicity of $(-1)^{n+1} \psi_2^{(n)}(x)$ for $n=3$}
	\label{fig:Complete monotonicity of psi function}
\end{figure}
For $n=3$, the function $(-1)^{n+1} \psi_2^{(n)}(x)$ simplifies to $\psi_2^{(3)}(x)$. As shown in Figure \ref{fig:Complete monotonicity of psi function}, $\psi_2^{(3)}(x)$ remains positive, and its derivatives alternate in sign, beginning with a negative value. This behaviour confirms the complete monotonicity of $\psi_2^{(3)}(x)$. We have illustrated this up to the $5$th derivative. Higher-order derivatives can also be shown to follow this pattern.
\par
As we have shown in Theorem \ref{poly double cm thm} that $\psi_2^{(n)}(x)$ is completely monotone, now we analyze its strong complete monotonic behaviour, and for that, let us assume
\begin{align*}
	\tilde{m}(t) = \frac{t^n}{(1 - e^{-t})^2}, \quad t > 0, \quad n\geq 2.
\end{align*}
It is clear that $\tilde{m}(t)$ is non-negative for all $t > 0$. Also, differentiating $\tilde{m}(t)$ with respect to $t$, we obtain
\begin{align*}
	\tilde{m}'(t) = \frac{n e^{-t} t^{n-1}}{(1 - e^{-t})^3} \left(e^t - \left(1 + \frac{2t}{n}\right)\right),\quad n\geq 2.
\end{align*}
Since
\begin{align*}
	e^t > 1 + t > 1 + \frac{2t}{n}, \qquad \forall \quad n\geq 2, \ \ t>0,
\end{align*}
we have, $\tilde{m}'(t) > 0$, which implies, $\tilde{m}(t)$ is a non-negative and non-decreasing function.

Note that $\tilde{m}(t)$ represents the density of the measure given in integral \eqref{poly double gamma in terms of lapl transform eq}. Hence, by using Remark \ref{scm remark}, the function $(-1)^{n+1} \psi_2^{(n)}(x)$ is not only completely monotone but also strongly completely monotone.

\begin{corollary}
Let $x > 0$ be a real number and $n \geq 2$ an integer. Then $\psi_2^{(n)}(x)$ satisfies the Turan-type inequality
\begin{align}\label{turan inequality of poly-double gamma function}
\left(\psi_2^{(n)}(x+1)\right)^2 &\leq \psi_2^{(n)}(x) \psi_2^{(n)}(x+2).
\end{align}
\end{corollary}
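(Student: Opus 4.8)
The plan is to read the Tur\'an inequality directly off the integral representation \eqref{poly double gamma in terms of lapl transform eq} by means of the Cauchy--Schwarz inequality, which is the integral analogue of the Lagrange identity \eqref{lagrange identity}. Set
\begin{align*}
f(x) := (-1)^{n+1}\psi_2^{(n)}(x) = \int_0^\infty e^{-xt}\,\tilde m(t)\,dt, \qquad \tilde m(t) = \frac{t^n}{(1-e^{-t})^2}\ge 0,
\end{align*}
so that $f$ is exactly the Laplace transform of the non-negative density $\tilde m$. The first step is to note that squaring removes the sign ambiguity: since $\bigl[(-1)^{n+1}\bigr]^2 = 1$, inequality \eqref{turan inequality of poly-double gamma function} is equivalent to $f(x+1)^2 \le f(x)\,f(x+2)$, and it suffices to prove this cleaner statement.

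The key step is to factor the exponential in $f(x+1)$ symmetrically. Writing $e^{-(x+1)t} = e^{-xt/2}\,e^{-(x+2)t/2}$ and distributing the density as $\tilde m(t) = \sqrt{\tilde m(t)}\cdot\sqrt{\tilde m(t)}$ (legitimate since $\tilde m\ge 0$), I obtain
\begin{align*}
f(x+1) = \int_0^\infty \Bigl(e^{-xt/2}\sqrt{\tilde m(t)}\Bigr)\Bigl(e^{-(x+2)t/2}\sqrt{\tilde m(t)}\Bigr)\,dt.
\end{align*}
Applying the Cauchy--Schwarz inequality to the two bracketed factors then yields
\begin{align*}
f(x+1)^2 \le \left(\int_0^\infty e^{-xt}\tilde m(t)\,dt\right)\left(\int_0^\infty e^{-(x+2)t}\tilde m(t)\,dt\right) = f(x)\,f(x+2),
\end{align*}
which is precisely the desired inequality.

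I do not anticipate any genuine obstacle here; the argument is a routine application of Cauchy--Schwarz to a Laplace transform. The only points demanding care are purely bookkeeping: verifying that the symmetric split of the exponent is exact (indeed $xt/2 + (x+2)t/2 = (x+1)t$), and confirming that the squared sign factor $\bigl[(-1)^{n+1}\bigr]^2 = 1$ cancels so that the inequality keeps the correct orientation for every parity of $n$. As an alternative route, the same conclusion follows at once from the log-convexity of $(-1)^{n+1}\psi_2^{(n)}$ already established in Theorem \ref{poly double cm thm}: evaluating convexity of $\log f$ at the midpoint $x+1$ of $x$ and $x+2$ gives $f(x+1) \le \sqrt{f(x)\,f(x+2)}$, which becomes \eqref{turan inequality of poly-double gamma function} after squaring.
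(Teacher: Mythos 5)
Your argument is correct, and your main route is genuinely different from (though closely related to) the paper's. The paper proves \eqref{turan inequality of poly-double gamma function} in two lines of general theory: since $(-1)^{n+1}\psi_2^{(n)}$ is completely monotonic by Theorem \ref{poly double cm thm}, it is logarithmically convex (cited from \cite{Widder_1941_The Laplace Transform}), and specializing the log-convexity inequality to the weight $a=\tfrac12$ with $y=x+2$ gives the result --- this is precisely the ``alternative route'' you mention in your closing sentence. Your primary argument instead bypasses the cited fact that complete monotonicity implies log-convexity and proves the midpoint inequality from scratch: splitting $e^{-(x+1)t}=e^{-xt/2}\,e^{-(x+2)t/2}$ inside the representation \eqref{poly double gamma in terms of lapl transform eq} and applying Cauchy--Schwarz is exactly the classical proof that a Laplace transform of a non-negative density is log-convex, so the two proofs are the same underlying fact at different levels of abstraction. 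What your version buys is self-containedness: the only inputs are the integral representation and Cauchy--Schwarz, with all convergence guaranteed since $\tilde m(t)\sim t^{n-2}$ near $0$ and $e^{-xt}\tilde m(t)$ decays at infinity for $n\ge 2$, $x>0$. What the paper's version buys is brevity and generality, since log-convexity gives the inequality for all weights $a\in[0,1]$ and all pairs $x,y$, not just the midpoint configuration. Two small bonuses of your computation worth noting: the sign bookkeeping $\bigl[(-1)^{n+1}\bigr]^2=1$ is handled correctly for both parities of $n$; and because the two factors $e^{-xt/2}\sqrt{\tilde m(t)}$ and $e^{-(x+2)t/2}\sqrt{\tilde m(t)}$ are never proportional on $(0,\infty)$, Cauchy--Schwarz is strict here, so your method actually yields the strict inequality
\begin{align*}
\left(\psi_2^{(n)}(x+1)\right)^2 < \psi_2^{(n)}(x)\,\psi_2^{(n)}(x+2),
\end{align*}
slightly sharper than the stated $\leq$.
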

\begin{proof}
Since completely monotonic function is also logarithmically convex \cite{Widder_1941_The Laplace Transform}, we have
\begin{align*}
(-1)^{n+1} \psi_2^{(n)}(a x+ (1-a)y) &\leq \left((-1)^{n+1} \psi_2^{(n)}(x)\right)^a \left((-1)^{n+1} \psi_2^{(n)}(y)\right)^{1-a},
\end{align*}
for any $a \in [0,1]$ and $x, y > 0$. In particular, setting $a = \dfrac{1}{2}$ and $y = x + 2$ yields the Turan-type inequality \eqref{turan inequality of poly-double gamma function}.
\end{proof}
The graphical representation of \eqref{turan inequality of poly-double gamma function} is given below.
\begin{figure}[H]
	\footnotesize
	\stackunder[5pt]{\includegraphics[scale=0.8]{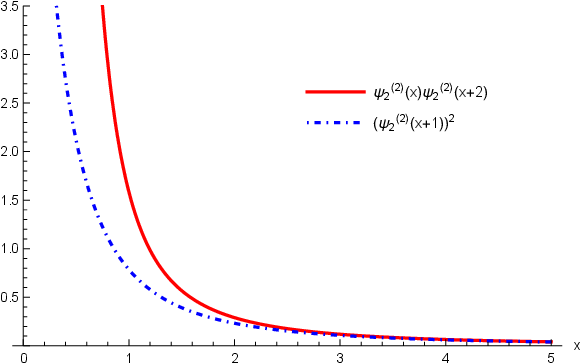}}{Graph of $\psi_2^{(n)}(x+1)$ and $\psi_2^{(n)}(x) \psi_2^{(n)}(x+2)$ for $n=2$}
	\caption{Turan-type inequality for $(-1)^{n+1} \psi_2^{(n)}(x)$ for $n=2$}
	\label{fig:Turan inequality}
\end{figure}
As shown in Figure \ref{fig:Turan inequality}, the red (solid) curve corresponding to $\psi_2^{(n)}(x) \psi_2^{(n)}(x+2)$ lies entirely above the blue (dot-dashed) curve representing $\left(\psi_2^{(n)}(x+1)\right)^2$, illustrating the Turan-type inequality \eqref{turan inequality of poly-double gamma function}.


In addition to the Turan-type inequality, which follows from the completely monotonic nature of $(-1)^{n+1} \psi_2^{(n)}(x)$, the sub-additive (super-additive) properties of $\psi_2^{(n)}(x)$ are also consequences of its completely monotonic behaviour. In \cite{Alzer_analysis mathematica_2025}, these properties are examined for trigonometric functions using their complete monotonicity. The next result establishes the subadditive (superadditive) properties of $\psi_2^{(n)}(x)$.
\par

\begin{theorem}\label{sub and super additivity thm}
Let $n$ and $r$ be non-negative integers with $n \geq 2$ and $r \geq 0$. Assume that $x_1, x_2 \in (0, m)$ such that  $x_1+x_2 \leq m$. Then the following inequalities hold.
\begin{enumerate}
\rm \item If $n$ and $r$ differ in parity, that is, one is even and the other is odd, then
\begin{align}\label{subadditivity of polydouble gamma}
\psi_2^{(n+r)}(x_1+x_2) < \psi_2^{(n+r)}(x_1) + \psi_2^{(n+r)}(x_2) .
\end{align}
\rm \item If $n$ and $r$ are of the same parity, that is, both even or both odd then
\begin{align}\label{superadditivity of polydouble gamma}
\psi_2^{(n+r)}(x_1+x_2)> \psi_2^{(n+r)}(x_1) + \psi_2^{(n+r)}(x_2)  .
\end{align}
\end{enumerate}
\end{theorem}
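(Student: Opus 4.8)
The plan is to reduce both statements to the integral representation established in Theorem \ref{poly double cm thm}, specialized to the order $n+r$. Writing $m(t) = t^{n+r}/(1-e^{-t})^2$, which is strictly positive for $t>0$, that representation reads
\[
\psi_2^{(n+r)}(x) = (-1)^{n+r+1} \int_0^\infty e^{-xt}\, m(t)\, dt, \qquad x>0.
\]
I would then form the combination $\psi_2^{(n+r)}(x_1) + \psi_2^{(n+r)}(x_2) - \psi_2^{(n+r)}(x_1+x_2)$ and, by linearity of the integral, express it as
\[
(-1)^{n+r+1}\int_0^\infty \Bigl( e^{-x_1 t} + e^{-x_2 t} - e^{-(x_1+x_2)t}\Bigr)\, m(t)\, dt.
\]

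The crux is to pin down the sign of the bracketed kernel. Setting $u = e^{-x_1 t}$ and $v = e^{-x_2 t}$, both of which lie in $(0,1)$ for $t>0$ and $x_1,x_2>0$, the kernel equals $u+v-uv = 1-(1-u)(1-v)$. Since $0<(1-u)(1-v)<1$, the kernel is strictly positive for every $t>0$. As $m(t)>0$ as well, the whole integral is strictly positive, so the sign of the entire expression is governed solely by the prefactor $(-1)^{n+r+1}$.

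It then remains to read off the parity. When $n$ and $r$ differ in parity, $n+r$ is odd and $(-1)^{n+r+1}=1$, so the difference $\psi_2^{(n+r)}(x_1) + \psi_2^{(n+r)}(x_2) - \psi_2^{(n+r)}(x_1+x_2)$ is strictly positive, yielding the subadditive inequality \eqref{subadditivity of polydouble gamma}. When $n$ and $r$ share parity, $n+r$ is even and $(-1)^{n+r+1}=-1$, so the difference is strictly negative, which is precisely the superadditive inequality \eqref{superadditivity of polydouble gamma}. Equivalently, the whole argument can be phrased as the general principle that a completely monotone function $g$ is automatically subadditive, since $g(x_1)+g(x_2)-g(x_1+x_2)=\int_{[0,\infty)}(e^{-x_1 t}+e^{-x_2 t}-e^{-(x_1+x_2)t})\, d\mu(t)\ge 0$; applying this to $g=(-1)^{n+r+1}\psi_2^{(n+r)}$, which is completely monotone by Theorem \ref{poly double cm thm}, gives both cases at once.

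I do not anticipate a genuine obstacle: the one point requiring care is the positivity of the exponential kernel, which the factorization $1-(1-u)(1-v)$ makes immediate, and the interchange of sum and integral is justified by absolute convergence of the representing integral. I would also remark that the hypotheses $x_1,x_2\in(0,m)$ with $x_1+x_2\le m$ serve only to keep every argument inside the domain $(0,\infty)$ on which the representation is valid; the specific value of $m$ plays no further role in the proof.
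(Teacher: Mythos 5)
Your proof is correct, but it takes a genuinely different route from the paper's. The paper does not invoke the Laplace representation at this point: it fixes the parity of $r$, sets $g_n(x)=(-1)^{n+1}\psi_2^{(n)}(x)$ and $P_n(x_1,x_2)=\left(g_n(x_1)\right)^{(r)}+\left(g_n(x_2)\right)^{(r)}-\left(g_n(x_1+x_2)\right)^{(r)}$, uses complete monotonicity of $g_n$ to show $P_n$ is increasing in $x_1$, and then bounds $P_n(x_1,x_2)\leq \left(g_n(m-x_2)\right)^{(r)}+\left(g_n(x_2)\right)^{(r)}-\left(g_n(m)\right)^{(r)}\leq 0$ using the sign of $\left(g_n\right)^{(r)}$ and its monotonicity; the hypothesis $x_1+x_2\leq m$ is used essentially in that bounding step. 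Your argument — reducing everything to the strict positivity of the kernel $e^{-x_1t}+e^{-x_2t}-e^{-(x_1+x_2)t}=1-(1-e^{-x_1t})(1-e^{-x_2t})$ against the positive density $t^{n+r}/(1-e^{-t})^2$ — is the general principle that a completely monotone function is subadditive, applied to $(-1)^{n+r+1}\psi_2^{(n+r)}$, and it buys you two things the paper's proof does not deliver: the inequalities come out strict (the paper's chain of estimates only yields $\leq$ and $\geq$, even though the statement asserts strict inequalities), and the constraint $x_1,x_2\in(0,m)$ with $x_1+x_2\leq m$ is seen to be superfluous, since your argument works for all $x_1,x_2>0$. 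What the paper's method buys in exchange is the machinery ($P_n$, its monotonicity in $x_1$, and then the maximization of $\tilde{P}_n$ in $x_2$) that is reused immediately afterwards to prove the sharpened inequalities \eqref{subadditivity sharp} and \eqref{superadditivity sharp}, whose bound $2\psi_2^{(n+r)}(m/2)-\psi_2^{(n+r)}(m)$ genuinely depends on $m$; your kernel computation does not directly give that refinement. One cosmetic point: you use the letter $m$ both for the density $m(t)$ in the Bernstein representation and for the bound in the hypothesis, so one of the two should be renamed.
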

Note that we consider $r=0$ as an even number.
\begin{proof}
Consider the case that $r$ is an odd number. Define the function $P_n(x_1, x_2)$ by
\begin{align}\label{value of P_n(x,y)}
P_n(x_1, x_2) = \left(g_n(x_1)\right)^{(r)} + \left(g_n(x_2)\right)^{(r)} - \left(g_n(x_1 + x_2)\right)^{(r)},
\end{align}
where
\begin{align}\label{value of g_n}
g_n(x_1) = (-1)^{n+1} \psi_2^{(n)}(x_1).
\end{align}
Differentiating $P_n(x_1, x_2)$ with respect to $x_1$ gives
\begin{align*}
\dfrac{\partial}{\partial x_1} P_n(x_1, x_2) = \left(g_n(x_1)\right)^{(r+1)} - \left(g_n(x_1 + x_2)\right)^{(r+1)}.
\end{align*}
Since $g_n(x_1)$ is completely monotonic, $\left(g_n(x_1)\right)^{(r+1)}$ is decreasing when $r$ is odd. It follows that $P_n(x_1,x_2)$ is an increasing function of $x_1$. Thus, from \eqref{value of P_n(x,y)}, we can write
\begin{align*}
P_n(x_1, x_2) \leq \left(g_n(m - x_2)\right)^{(r)} + \left(g_n(x_2)\right)^{(r)} - \left(g_n(m)\right)^{(r)}.
\end{align*}
Moreover, $\left(g_n(x_2)\right)^{(r)}$ is increasing for odd $r$ due to complete monotonicity of $\left(g_n(x_2)\right)$, and $\left(g_n(m - x_2)\right)^{(r)}$ is negative for odd $r$ due to the complete monotonicity of $g_n(x_1)$, we conclude that
\begin{align*}
P_n(x_1, x_2) \leq 0.
\end{align*}
Using \eqref{value of P_n(x,y)} and \eqref{value of g_n}, we obtain
\begin{align*}
\left((-1)^{n+1} \psi_2^{(n)}(x_1)\right)^{(r)} + \left((-1)^{n+1} \psi_2^{(n)}(x_2)\right)^{(r)} - \left((-1)^{n+1} \psi_2^{(n)}(x_1 + x_2)\right)^{(r)} \leq 0.
\end{align*}
Hence, when $n$ is even, the above inequality gives \eqref{subadditivity of polydouble gamma}, and when $n$ is odd, it gives \eqref{superadditivity of polydouble gamma}. The case for even $r$ follows in a similar way.
\end{proof}
It is important to note that we can sharpen inequalities \eqref{subadditivity of polydouble gamma} and \eqref{superadditivity of polydouble gamma} in the following manner.
\begin{theorem}
Let $n, r$ be non-negative integers with $n \geq 2$ and $r \geq 0$. Then, for $x_1, x_2 \in (0, m)$ such that $x_1+x_2 \leq m$, the following inequalities hold.
\begin{enumerate}
\rm \item If $n$ and $r$ are differ in parity that is, one is even and the other is odd then
\begin{align}\label{subadditivity sharp}
\psi_2^{(n+r)}(x_1+x_2) - \psi_2^{(n+r)}(x_1) -\psi_2^{(n+r)}(x_2) \leq  2 \psi_2^{(n+r)}\left(\dfrac{m}{2}\right)-\psi_2^{(n+r)}(m).
\end{align}
\rm \item If $n$ and $r$ are of the same parity that is, both even or both odd then
\begin{align}\label{superadditivity sharp}
\psi_2^{(n+r)}(x_1+x_2) - \psi_2^{(n+r)}(x_1) -\psi_2^{(n+r)}(x_2) \geq  2 \psi_2^{(n+r)}\left(\dfrac{m}{2}\right)-\psi_2^{(n+r)}(m).
\end{align}
\end{enumerate}
\end{theorem}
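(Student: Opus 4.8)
The plan is to upgrade the proof of Theorem~\ref{sub and super additivity thm} from a sign estimate to an exact optimisation. I retain the auxiliary functions $g_n(x)=(-1)^{n+1}\psi_2^{(n)}(x)$ and $P_n(x_1,x_2)=(g_n(x_1))^{(r)}+(g_n(x_2))^{(r)}-(g_n(x_1+x_2))^{(r)}$ from \eqref{value of g_n} and \eqref{value of P_n(x,y)}, and I treat the case of odd $r$ first. As in that proof, the complete monotonicity of $g_n$ (Theorem~\ref{poly double cm thm}) makes $(g_n)^{(r+1)}$ decreasing, so that $\partial_{x_1}P_n=(g_n(x_1))^{(r+1)}-(g_n(x_1+x_2))^{(r+1)}\ge 0$ and $P_n$ is nondecreasing in $x_1$. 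The constraint $x_1+x_2\le m$ forces $x_1\le m-x_2$, whence $P_n(x_1,x_2)\le P_n(m-x_2,x_2)$ and the whole problem collapses onto the diagonal segment $x_1+x_2=m$.

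The essential new step is to optimise over that segment rather than discarding it with the crude bound $P_n\le 0$. I set $H(x_2)=P_n(m-x_2,x_2)=(g_n(x_2))^{(r)}+(g_n(m-x_2))^{(r)}-(g_n(m))^{(r)}$ on $(0,m)$ and compute $H'(x_2)=(g_n(x_2))^{(r+1)}-(g_n(m-x_2))^{(r+1)}$. The symmetric point $x_2=m/2$ is a critical point, and the main obstacle is to confirm that it is the \emph{global} extremiser and not merely a stationary point. This follows from the monotonicity of $(g_n)^{(r+1)}$: for odd $r$ it is decreasing, so when $x_2<m/2$ (hence $x_2<m-x_2$) one has $H'(x_2)>0$, while for $x_2>m/2$ one has $H'(x_2)<0$. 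Thus $H$ increases then decreases and attains its maximum at $x_2=m/2$, giving $P_n(x_1,x_2)\le H(m/2)=2(g_n(m/2))^{(r)}-(g_n(m))^{(r)}$; the extremal configuration is $x_1=x_2=m/2$.

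Finally I translate back through $(g_n(x))^{(r)}=(-1)^{n+1}\psi_2^{(n+r)}(x)$, which rewrites the last estimate as $(-1)^{n+1}[\psi_2^{(n+r)}(x_1)+\psi_2^{(n+r)}(x_2)-\psi_2^{(n+r)}(x_1+x_2)]\le (-1)^{n+1}[2\psi_2^{(n+r)}(m/2)-\psi_2^{(n+r)}(m)]$. Dividing by $(-1)^{n+1}$, and reversing the inequality when $n$ is even, yields \eqref{subadditivity sharp} for even $n$ and \eqref{superadditivity sharp} for odd $n$, i.e.\ precisely the parity dichotomy of Theorem~\ref{sub and super additivity thm} with the right-hand sides now read off from the midpoint value. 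The remaining case of even $r$ runs identically, except that $(g_n)^{(r+1)}$ is then increasing: this reverses the monotonicity of $P_n$ in $x_1$ and the sign of $H'$, so that $x_2=m/2$ becomes a \emph{minimiser} of $P_n$ on the segment and every intermediate inequality flips, yet the same substitution reproduces \eqref{subadditivity sharp}--\eqref{superadditivity sharp}. I expect the only delicate issues to be this single-sign-change argument pinning the extremum at $m/2$, together with the careful tracking of inequality directions through the factors $(-1)^{n+1}$ and the two parities of $r$, which is exactly where the precise form of the right-hand side is decided.
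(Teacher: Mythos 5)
Your proposal is correct and takes essentially the same route as the paper's own proof: your $H(x_2)$ is precisely the paper's $\tilde{P}_n(x_2)$, and the paper likewise reduces to the diagonal $x_1 = m - x_2$ using monotonicity of $P_n$ in $x_1$, pins the extremum of $\tilde{P}_n$ at $x_2 = \frac{m}{2}$ by the same single-sign-change argument, and translates back via $(g_n(x))^{(r)} = (-1)^{n+1}\psi_2^{(n+r)}(x)$. One caveat you share with the paper: unwinding $P_n \leq \tilde{P}_n\left(\frac{m}{2}\right)$ (resp.\ $\geq$ for even $r$) literally gives the right-hand side $\psi_2^{(n+r)}(m) - 2\psi_2^{(n+r)}\left(\frac{m}{2}\right)$, i.e.\ the \emph{negative} of the bound displayed in \eqref{subadditivity sharp} and \eqref{superadditivity sharp} (and this negative is the sharp constant, attained at $x_1 = x_2 = \frac{m}{2}$); this final sign slip is present in the paper's last step as well, so it does not distinguish your argument from the paper's.
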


Note that we consider $r=0$ as an even number.

\begin{proof}
In the proof of Theorem \ref{sub and super additivity thm}, we established that for odd values of $r$, the function $P_n(x_1, x_2)$ is increasing in $x_1$, which gives
\begin{align}\label{P_n lessthan P_n tilde}
P_n(x_1, x_2) \leq \tilde{P}_n(x_2),
\end{align}
where
\begin{align*}
\tilde{P}_n(x_2) = \left(g_n(m - x_2)\right)^{(r)} + \left(g_n(x_2)\right)^{(r)} - \left(g_n(m)\right)^{(r)}.
\end{align*}
Differentiating $\tilde{P}_n(x_2)$ with respect to $x_2$ yields
\begin{align*}
\tilde{P}_n'(x_2) = -\left(g_n(m - x_2)\right)^{(r+1)} + \left(g_n(x_2)\right)^{(r+1)}.
\end{align*}
Since $g_n$ is completely monotonic, it follows that $\left(g_n(x_2)\right)^{(r+1)}$ is decreasing on the interval $(0, m)$. Consequently, we obtain
\begin{align*}
\left(g_n(x_2)\right)^{(r+1)} & \geq \left(g_n(m - x_2)\right)^{(r+1)}\quad  \forall \ x_2 \in\left(0,\dfrac{m}{2}\right]\\
\left(g_n(x_2)\right)^{(r+1)} & \leq \left(g_n(m -x_2)\right)^{(r+1)} \quad  \forall \ x_2  \in\left[\dfrac{m}{2},m\right).
\end{align*}
This implies that $\tilde{P}_n(x_2)$ is increasing on $\left(0, \dfrac{m}{2}\right]$ and decreasing on $\left[\dfrac{m}{2}, m\right)$, and thus attains its maximum at $x_2 = \dfrac{m}{2}$. Therefore, it follows from inequality \eqref{P_n lessthan P_n tilde} that
\begin{align*}
P_n(x_1, x_2) \leq \tilde{P}_n(x_2) \leq \tilde{P}_n\left(\dfrac{m}{2}\right)
\end{align*}
which yields the result stated in \eqref{subadditivity sharp}. The proof of \eqref{superadditivity sharp} follows in a similar manner.
\end{proof}


\begin{remark}\label{polydouble zeta relation}
The bounds on the right-hand sides of inequalities \eqref{subadditivity sharp} and \eqref{superadditivity sharp} can be further rewritten in terms of $\zeta(s, a)$ and $\psi^{(n)}(x)$ by applying the following identities
\begin{align*}
\psi_2^{(n+r)}\left(\dfrac{m}{2}\right) - \psi_2^{(n+r)}(m) &= (-1)^{n+1} (n+r)! \left(2\zeta\left(n+r, \dfrac{m}{2}\right) + (2 - m) \zeta\left(n+r+1, \dfrac{m}{2}\right)\right) \\
&\quad + (-1)^{n-1} (n+r)! \left(\zeta(n+r, m) + (1 - m) \zeta(n+r+1, m)\right),
\end{align*}
and
\begin{align*}
\psi_2^{(n+r)}\left(\dfrac{m}{2}\right) - \psi_2^{(n+r)}(m) &= -2(n+r) \psi^{(n+r-1)}\left(\dfrac{m}{2}\right) + (2 - m) \psi^{(n+r)}\left(\dfrac{m}{2}\right) \\
&\quad + (n+r) \psi^{(n+r-1)}(m) - (1 - m) \psi^{(n+r)}(m),
\end{align*}
where $\zeta(s, a)$ and $\psi^{(n)}(x)$ denote the Hurwitz zeta function and the polygamma function, respectively, defined as
\begin{align*}
\zeta(s, a) = \sum_{k=0}^{\infty} \dfrac{1}{(k + a)^s}, \quad  a > 0, \  s > 1,
\end{align*}
and
\begin{align*}
\psi^{(n)}(x) = (-1)^{n+1} n! \sum_{k=0}^{\infty} \dfrac{1}{(x + k)^{n+1}}, \quad x > 0, \   n \geq 1.
\end{align*}
\end{remark}
Remark \ref{polydouble zeta relation} suggests that similar identities involving other special functions can be obtained. Such development may lead to further analysis, providing scope for future research in this direction. Next, we write the recurrence relation for $\psi_2^{(n)}(x)$, which will help us to prove Theorem \ref{limit formula thm}.
\begin{lemma}
Let $n$ be a non-negative integer. The recurrence relation for the poly-double gamma function $\psi_2^{(n)}(x)$ is given by
\begin{align}\label{recurrence relation of poly-double gamma function}
\psi_2^{(n)}(x+1) + \psi^{(n)}(x) = \psi_2^{(n)}(x), \quad  x > 0.
\end{align}
\end{lemma}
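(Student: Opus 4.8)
The plan is to work directly from the series representation \eqref{poly-double gamma function series form}, since the recurrence is fundamentally a statement about shifting the summation index. First I would write $\psi_2^{(n)}(x+1)$ explicitly and perform the substitution $j = k+1$. This turns the summand $\tfrac{1+k}{(x+1+k)^{n+1}}$ into $\tfrac{j}{(x+j)^{n+1}}$ and raises the lower limit to $j = 1$, so that
\begin{align*}
\psi_2^{(n)}(x+1) = (-1)^{n+1} n! \sum_{j=1}^{\infty} \frac{j}{(x+j)^{n+1}}.
\end{align*}

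Next I would compute the difference $\psi_2^{(n)}(x) - \psi_2^{(n)}(x+1)$ by subtracting this shifted series from the original series for $\psi_2^{(n)}(x)$, both indexed by the same variable. For each $k \geq 1$ the numerators satisfy $(1+k) - k = 1$, so the corresponding terms combine into $\tfrac{1}{(x+k)^{n+1}}$, while the $k = 0$ term of $\psi_2^{(n)}(x)$, namely $\tfrac{1}{x^{n+1}}$, survives untouched because the shifted series contributes nothing at $k = 0$. Collecting these gives
\begin{align*}
\psi_2^{(n)}(x) - \psi_2^{(n)}(x+1) = (-1)^{n+1} n! \sum_{k=0}^{\infty} \frac{1}{(x+k)^{n+1}},
\end{align*}
which is exactly $\psi^{(n)}(x)$ by the series definition of the polygamma function. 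Rearranging yields \eqref{recurrence relation of poly-double gamma function}.

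The only point requiring care is the legitimacy of subtracting the two series term by term, which is justified by absolute convergence: for $n \geq 2$ the summand of $\psi_2^{(n)}$ behaves like $k^{-n}$ as $k \to \infty$, so both series converge absolutely and may be freely recombined. I do not anticipate any genuine obstacle here; the argument is a routine index shift, and this is why the statement is recorded as a lemma rather than a theorem.

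As an alternative that leverages earlier work, one can avoid series manipulation entirely by using the integral representation \eqref{integral form of poly-double gamma function} from Theorem \ref{poly double cm thm} together with the standard integral representation $\psi^{(n)}(x) = (-1)^{n+1}\int_0^\infty e^{-xt} t^n/(1-e^{-t})\,dt$. Since $e^{-xt} - e^{-(x+1)t} = e^{-xt}\bigl(1 - e^{-t}\bigr)$, one obtains
\begin{align*}
\psi_2^{(n)}(x) - \psi_2^{(n)}(x+1) = (-1)^{n+1} \int_0^\infty e^{-xt}\bigl(1 - e^{-t}\bigr) \frac{t^n}{(1-e^{-t})^2}\, dt,
\end{align*}
and a single factor of $(1-e^{-t})$ cancels, leaving precisely the integral for $\psi^{(n)}(x)$. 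This route is arguably the cleanest, as the entire computation reduces to one algebraic cancellation in the integrand.
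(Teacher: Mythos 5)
Your index-shift computation is correct, and so is your alternative via Laplace transforms: both cleanly give $\psi_2^{(n)}(x)-\psi_2^{(n)}(x+1)=\psi^{(n)}(x)$ whenever $n\geq 2$. This is a genuinely different route from the paper's, which never touches the series at all: it starts from the functional equation $\Gamma_2(x+1)\,\Gamma(x)=\Gamma_2(x)$, takes the logarithmic derivative to obtain $\psi_2(x+1)+\psi(x)=\psi_2(x)$ as in \eqref{digamma and di double gamma function}, and then differentiates repeatedly. Your approach has the virtue of being self-contained at the level of the representations \eqref{poly-double gamma function series form} and \eqref{integral form of poly-double gamma function}; the paper's approach has the virtue of inheriting the recurrence directly from the defining functional equation of $\Gamma_2$.

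That difference matters here, because there is a gap relative to the statement as recorded: the lemma asserts the recurrence for \emph{every} non-negative integer $n$, including $n=0$ and $n=1$, and neither of your arguments can reach those cases. The series representation \eqref{poly-double gamma function series form} is only defined, and only convergent, for $n\geq 2$; for $n=1$ the summand $\tfrac{1+k}{(x+k)^{2}}$ behaves like $1/k$ and the series diverges, and for $n=0$ the terms do not even tend to zero. The same obstruction hits the integral route: in \eqref{integral form of poly-double gamma function} the integrand satisfies $t^{n}/(1-e^{-t})^{2}\sim t^{n-2}$ as $t\to 0^{+}$, which is not integrable for $n\leq 1$. Indeed, for $n=0$ the function $\psi_2(x)$ is defined in the paper by a separate formula (the logarithmic derivative of the Weierstrass product), precisely because the series definition breaks down there. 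The paper's functional-equation argument covers all $n\geq 0$ uniformly, since differentiation of $\psi_2(x+1)+\psi(x)=\psi_2(x)$ requires no series or integral representation. Your proof does cover every case the paper subsequently uses (all applications of \eqref{recurrence relation of poly-double gamma function} have $n\geq 2$), but to prove the lemma as stated you would need either to handle $n=0,1$ separately from the definition of $\psi_2$, or to adopt the functional-equation route.
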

\begin{proof}
The recurrence relation for the double gamma function is \cite{Barnes_1899_genesis of double gamma}
\begin{align*}
\Gamma_2(x+1) \Gamma(x) = \Gamma_2(x), \quad x>0.
\end{align*}
Taking the logarithmic derivative on both sides with respect to $x$ yields
\begin{align}\label{digamma and di double gamma function}
\psi_2(x+1) + \psi(x) = \psi_2(x),
\end{align} 	
where $\psi(x)$ is the digamma function, and $\psi_2(x)$ denotes the logarithmic derivative of $ \Gamma_2(x)$, referred to as the \textit{di-double gamma function}.\\
Further differentiation of \eqref{digamma and di double gamma function} up to $n+1$ times leads to \eqref{recurrence relation of poly-double gamma function}.
\end{proof}

%


Based on the asymptotic expansion of the polygamma function given in \eqref{asymptotic expansion for polygamma}, we derive the following lemma, which provides the asymptotic representation for the poly-double gamma function.
\begin{lemma}
Let $n\geq 2$ be a natural number and $x>0$. Then the asymptotic expansion of $\psi_2^{(n)}(x+1)$ is given by
\begin{equation}\label{asymptotic expansion of poly-double gamma function}
\begin{aligned}
\psi_2^{(n)}(x+1)&\!=\!-x\psi^{(n)}(x+1)\!-\!(n+1)\psi^{(n-1)}(x+1)\!+\!(-1)^{n}\left(\dfrac{x^2}{2}\!+\!\dfrac{x}{2}\!+\!\dfrac{1}{12}\right)\dfrac{ n!}{x^{n+1}}\\
&\!+\!(-1)^{n+1}\left( x+\dfrac{1}{2}\right) \dfrac{(n+1)!}{nx^n}\!+\!(-1)^n\dfrac{(n+1)!}{(2n-2)x^{n-1}}\!+\!\sigma_n(x)+\tau_n(x),
\end{aligned}
\end{equation}
where $\sigma_n(x)$ and $\tau_n(x)$ are defined as
\begin{align}\label{value of sigma}
\sigma_n(x) = (-1)^{n+1} \sum_{k=1}^{N-1} B_{2k+2} \dfrac{(2k+n-1)!}{(2k+2)! x^{2k+n}}
\end{align}
\begin{align}\label{value of tau}
\tau_n(x) = (-1)^{n+1} \int_0^\infty t^{n-3} e^{-xt} \left(\dfrac{t}{e^t - 1} - \sum_{k=1}^{2N} \dfrac{B_k}{k!} t^k \right) dt, \quad N=1,2,3, \ldots .
\end{align}
and $B_k$ denotes the well-known Bernoulli numbers.
\end{lemma}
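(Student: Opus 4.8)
The plan is to deduce the expansion from the well-understood asymptotics of the ordinary polygamma function. First I would isolate the exact polygamma content of $\psi_2^{(n)}(x+1)$. Evaluating the relation $\psi_2^{(n)}(x) = -n\,\psi^{(n-1)}(x) + (1-x)\,\psi^{(n)}(x)$ at $x+1$ (equivalently, shifting the index $k\mapsto k+1$ in \eqref{poly-double gamma function series form} and writing the numerator as $j=(x+j)-x$ before splitting the sum) gives the exact identity $\psi_2^{(n)}(x+1) = -x\,\psi^{(n)}(x+1) - n\,\psi^{(n-1)}(x+1)$. I would then split $-n\,\psi^{(n-1)}(x+1) = -(n+1)\,\psi^{(n-1)}(x+1) + \psi^{(n-1)}(x+1)$, so that the first two terms of \eqref{asymptotic expansion of poly-double gamma function} appear exactly and the entire task reduces to expanding the single surviving copy of $\psi^{(n-1)}(x+1)$.

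For that surviving term I would use the Bernoulli integral representation $\psi^{(n-1)}(x+1) = (-1)^{n}\int_0^\infty t^{\,n-2}e^{-xt}\,\dfrac{t}{e^{t}-1}\,dt$, which is the Laplace-transform form underlying \eqref{asymptotic expansion for polygamma}. Substituting the generating function $\dfrac{t}{e^{t}-1} = \sum_{k\geq 0}\dfrac{B_k}{k!}t^{k}$, truncating at order $2N$, and integrating term by term against $\int_0^\infty t^{\alpha}e^{-xt}\,dt = \Gamma(\alpha+1)/x^{\alpha+1}$ (Watson's lemma) produces an explicit rational part plus a tail. The low-order Bernoulli values $B_0=1$, $B_1=-\tfrac12$, $B_2=\tfrac16$ give (up to the common factor $(-1)^{n}$) the three leading contributions $\dfrac{(n-2)!}{x^{n-1}}$, $-\dfrac{(n-1)!}{2x^{n}}$, $\dfrac{n!}{12\,x^{n+1}}$, which I would regroup into the compact coefficients $(\tfrac{x^2}{2}+\tfrac{x}{2}+\tfrac1{12})\tfrac{n!}{x^{n+1}}$, $(x+\tfrac12)\tfrac{(n+1)!}{n x^{n}}$ and $\tfrac{(n+1)!}{(2n-2)x^{n-1}}$ displayed in \eqref{asymptotic expansion of poly-double gamma function}; the remaining even-index Bernoulli terms assemble into the finite sum $\sigma_n(x)$ of \eqref{value of sigma}, and the truncation error is an integral of the form $\tau_n(x)$ in \eqref{value of tau}.

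The main obstacle is twofold. The first part is purely algebraic bookkeeping: one must check that the regrouped coefficients agree with the stated expansion, which reduces to short factorial identities (for example, verifying that the $x^{-(n-1)}$ contributions collect to $(n-2)!$ via $\tfrac{n(n-1)}{2}-(n+1)(n-1)+\tfrac{n(n+1)}{2}=1$, and similarly at the next two orders). The more substantive part is justifying that $\tau_n(x)$ is a genuine remainder: since the Bernoulli series for $t/(e^t-1)$ is only asymptotic, this is not a convergent rearrangement, so I would appeal to Watson's lemma to guarantee that for each fixed $N$ the remainder is $o$ of the last retained term as $x\to\infty$. This requires subtracting the Bernoulli partial sum to high enough order that the bracketed integrand is integrable at $t=0$, together with its exponential decay at infinity, so that differentiation under the integral sign and the term-by-term integration are legitimate. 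Combining the exact part of the first paragraph with the expansion of the surviving $\psi^{(n-1)}(x+1)$ then yields \eqref{asymptotic expansion of poly-double gamma function}.
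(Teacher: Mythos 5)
Your route is genuinely different from the paper's. The paper proves the lemma by differentiating $n$ times the quoted asymptotic expansion \eqref{asymptotic expansion of di double gamma function} of $\psi_2(x+1)$, and then devotes a separate discussion to justifying term-by-term differentiation of an asymptotic series (via analyticity of $\log \Gamma_2(z+1)$). You instead start from the exact identity $\psi_2^{(n)}(x+1)=-x\,\psi^{(n)}(x+1)-n\,\psi^{(n-1)}(x+1)$, which is correct and follows from \eqref{poly-double gamma function series form} exactly as you describe, split off $-(n+1)\psi^{(n-1)}(x+1)$, and expand the single surviving term $\psi^{(n-1)}(x+1)=(-1)^{n}\int_0^\infty t^{\,n-2}e^{-xt}\,\frac{t}{e^{t}-1}\,dt$ by Watson's lemma. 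This is more self-contained: it needs no external expansion of $\log\Gamma_2$ and sidesteps the differentiation-of-asymptotics issue entirely. Your verification of the explicit coefficients is also right: the $x^{-(n-1)}$, $x^{-n}$, $x^{-(n+1)}$ parts of the displayed terms in \eqref{asymptotic expansion of poly-double gamma function} do collapse to $(-1)^{n}(n-2)!$, $(-1)^{n+1}(n-1)!/2$ and $(-1)^{n}n!/12$, which are precisely the $B_0$, $B_1$, $B_2$ contributions of your integral.

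The gap is in your final step, where you assert that the higher Bernoulli terms ``assemble into'' \eqref{value of sigma} and that the truncation error ``is an integral of the form'' \eqref{value of tau}. This identification is never checked, and it fails for those expressions as printed. What your method actually yields for the tail is
\begin{align*}
(-1)^{n}\sum_{k=1}^{N-1}B_{2k+2}\,\frac{(2k+n)!}{(2k+2)!\,x^{2k+n+1}}
\;+\;(-1)^{n}\int_0^\infty t^{\,n-2}e^{-xt}\left(\frac{t}{e^{t}-1}-\sum_{k=0}^{2N}\frac{B_k}{k!}\,t^{k}\right)dt,
\end{align*}
whereas \eqref{value of sigma} and \eqref{value of tau} carry the opposite sign $(-1)^{n+1}$, have the factorial and the power of $x$ (respectively of $t$) lowered by one, and in \eqref{value of tau} the subtracted sum starts at $k=1$ rather than $k=0$. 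These differences are not cosmetic: because $B_0$ is not subtracted in \eqref{value of tau}, its integrand behaves like $t^{\,n-3}$ as $t\to 0^{+}$, so that integral diverges for $n=2$ and, for $n\geq 3$, is of order $x^{-(n-2)}$ --- larger than the leading term of the whole expansion, hence it cannot be a remainder. So your argument, carried out honestly, proves a corrected version of the lemma rather than the statement as printed; to close it you must display the tail you actually obtain (as above) and note that $\sigma_n,\tau_n$ have to be amended accordingly. (This defect originates in the statement itself: differentiating \eqref{asymptotic expansion of di double gamma function} $n$ times, as the paper does, also fails to reproduce \eqref{value of sigma}--\eqref{value of tau} literally, so the mismatch is not a flaw of your strategy but it does prevent your proof, or any proof, of the lemma exactly as stated.)
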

\begin{proof}
Consider the expression of $\Gamma_2$ given in \eqref{barnes g function definition}. From the logarithmic derivative of the asymptotic expression of $\Gamma_2$ using [\cite{Ferreira_2001_asym of double gamma}, Theorem 1] (see also \cite{Pedresan_2005_gamma2 remainder mediterr}), we obtain
%
\begin{equation}\label{asymptotic expansion of di double gamma function}
\begin{aligned}
	\psi_2(x+1)\! =\!-\dfrac{x}{2} \!-\! x \psi(x+1)\! -\! \log \Gamma(x+1)\! +\! \log x \left( x + \dfrac{1}{2} \right)\!+\! \dfrac{1}{x} \left( \dfrac{x^2}{2}\! +\! \dfrac{x}{2} + \dfrac{1}{12} \right)\\
	- \sum_{k=1}^{N-1} \dfrac{B_{2k+2}}{(2k+1)(2k+2) x^{2k+1}} - \int_{0}^{\infty} e^{-xt} \left(\dfrac{t}{e^t-1}-\sum_{k=0}^{2N} \dfrac{B_k}{k!}t^k\right) \dfrac{dt}{t^2}.
\end{aligned}
\end{equation}
Taking the $n^{\text{th}}$ derivative of \eqref{asymptotic expansion of di double gamma function}, we arrive at \eqref{asymptotic expansion of poly-double gamma function}. This concludes the proof.
\end{proof}
In the above Lemma, it is important to note that we obtain the asymptotic expansion of $\psi_2^{(n)}(x+1)$ by differentiating the asymptotic expansion of $\log \Gamma_2(x+1)$. This relies on the fact that the derivative of an asymptotic expansion is again an asymptotic, which is not true in general. However, it is valid here because the function $\log \Gamma_2(z+1)$ is analytic in the half-plane $\Re(z) > -1$. In \cite{Olver_asymp and special fun_nist}, it is stated that the differentiation of an asymptotic expansion is legitimate when the underlying function $f(z)$ is analytic in the complex variable $z$. In such cases, the asymptotic expansion of $f(z)$ may be differentiated any number of times in any sector that is properly interior to the original sector of validity and has the same vertex.

Now, using the recurrence relation for $\psi_2^{(n)}(x+1)$ given in \eqref{recurrence relation of poly-double gamma function}, together with its asymptotic expansion in \eqref{asymptotic expansion of poly-double gamma function}, we derive the following result, which will be used in the proof of Theorem \ref{complete monotonicity of F(x;omega) theorem}.


\begin{theorem}\label{limit formula thm}
For $n \geq 2$, the poly-double gamma function $\psi_2^{(n)}(x)$ satisfies
\begin{align}\label{asymptotic expansion for poly-double gamma function limit value}
\lim_{x \to \infty} x^{n-1} \psi_2^{(n)}(x) = (-1)^{n-1} (n-2)!, \quad x \in (0, \infty).
\end{align}
\end{theorem}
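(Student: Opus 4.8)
The plan is to bypass the asymptotic machinery and reduce everything to the polygamma limit \eqref{asymptotic expansion for polygamma limit form} by exploiting the representation of $\psi_2^{(n)}$ recorded in the introduction,
\[
\psi_2^{(n)}(x) = -n\,\psi^{(n-1)}(x) + (1-x)\,\psi^{(n)}(x), \qquad n \geq 2,\ x > 0.
\]
Multiplying through by $x^{n-1}$ gives
\[
x^{n-1}\psi_2^{(n)}(x) = -n\,x^{n-1}\psi^{(n-1)}(x) + x^{n-1}\psi^{(n)}(x) - x^{n}\psi^{(n)}(x),
\]
so the problem splits into three summands whose limits can be read off individually. An equally valid route is the one hinted at in the text, namely combining the recurrence \eqref{recurrence relation of poly-double gamma function} with the asymptotic expansion \eqref{asymptotic expansion of poly-double gamma function} of $\psi_2^{(n)}(x+1)$; I indicate below why I prefer the representation above.

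First I would compute each limit from \eqref{asymptotic expansion for polygamma limit form}. Applying that formula with index $n-1$ yields $\lim_{x\to\infty} x^{n-1}\psi^{(n-1)}(x) = (-1)^{n-2}(n-2)! = (-1)^{n}(n-2)!$, while with index $n$ it gives $\lim_{x\to\infty} x^{n}\psi^{(n)}(x) = (-1)^{n-1}(n-1)!$. The middle summand is harmless, since $x^{n-1}\psi^{(n)}(x) = x^{-1}\bigl(x^{n}\psi^{(n)}(x)\bigr) \to 0$. Assembling the three contributions,
\[
\lim_{x\to\infty} x^{n-1}\psi_2^{(n)}(x) = -n(-1)^{n}(n-2)! + 0 - (-1)^{n-1}(n-1)! = (-1)^{n}\bigl[(n-1)! - n(n-2)!\bigr].
\]

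The only genuine point of care is the final algebraic collapse: writing $(n-1)! = (n-1)(n-2)!$ gives $(n-1)! - n(n-2)! = \bigl((n-1)-n\bigr)(n-2)! = -(n-2)!$, whence the limit equals $-(-1)^{n}(n-2)! = (-1)^{n-1}(n-2)!$, as claimed. I expect the main obstacle to be purely bookkeeping: keeping the parities $(-1)^{n-2}=(-1)^{n}$ and $-(-1)^{n}=(-1)^{n-1}$ straight and confirming that each of the three summands converges. This is precisely why I favour the representation-based argument over the asymptotic one: the latter forces one to track five separate nonzero constant contributions arising from \eqref{asymptotic expansion of poly-double gamma function} (together with a verification that $x^{n-1}\sigma_n(x)$ and $x^{n-1}\tau_n(x)$ tend to $0$), and although those contributions sum to the same $-(n-2)!$ after a longer cancellation, the extra work is avoidable here.
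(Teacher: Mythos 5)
Your proof is correct, but it takes a genuinely different and more elementary route than the paper. The paper substitutes the recurrence relations \eqref{recurrence relation for polygamma} and \eqref{recurrence relation of poly-double gamma function} into the asymptotic expansion \eqref{asymptotic expansion of poly-double gamma function} of $\psi_2^{(n)}(x+1)$ (itself obtained by differentiating the Ferreira--L\'opez expansion of $\log\Gamma_2$), multiplies by $x^{n-1}$, verifies $x^{n-1}\sigma_n(x)\to 0$ and $x^{n-1}\tau_n(x)\to 0$, and then collapses five nonzero constant contributions via \eqref{asymptotic expansion for polygamma limit form} --- exactly the bookkeeping you anticipated and chose to avoid. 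You instead start from the identity $\psi_2^{(n)}(x) = -n\,\psi^{(n-1)}(x) + (1-x)\,\psi^{(n)}(x)$ stated in the introduction (which follows at once from the series \eqref{poly-double gamma function series form} by writing $1+k=(x+k)+(1-x)$), split $x^{n-1}\psi_2^{(n)}(x)$ into three terms, and apply \eqref{asymptotic expansion for polygamma limit form} twice; your parity and factorial bookkeeping, $-n(-1)^{n}(n-2)!+(-1)^{n}(n-1)! = (-1)^{n-1}(n-2)!$, checks out, and the observation that $x^{n-1}\psi^{(n)}(x)=x^{-1}\bigl(x^{n}\psi^{(n)}(x)\bigr)\to 0$ is valid since $x^{n}\psi^{(n)}(x)$ has a finite limit. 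What your route buys is brevity and self-containedness: it needs no asymptotic expansion of $\psi_2^{(n)}$, no remainder estimates, and no appeal to the legitimacy of differentiating asymptotic series. What the paper's route buys is that it exercises machinery the paper develops anyway --- the expansion \eqref{asymptotic expansion of poly-double gamma function} is presented as a result of independent interest, and the limit theorem doubles as a consistency check of it --- but as a proof of the limit alone, your argument is the cleaner one.
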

\begin{proof}
Substituting the values of $\psi^{(n)}(x+1)$ and $\psi_2^{(n)}(x+1)$  from \eqref{recurrence relation for polygamma} and \eqref{recurrence relation of poly-double gamma function}, respectively, into \eqref{asymptotic expansion of poly-double gamma function}, we obtain
\newline
$
\displaystyle
 \psi_2^{(n)}(x)-\psi^{(n)}(x)
 $
\begin{align}\label{difference of poly-double gamma and poly gamma}
=\!\! -x &\! \left( \!\! \psi^{(n)}(x) \! + \! \dfrac{(-1)^n n!}{x^{n+1}} \! \right) \!\! -\!\!(n+1)\! \! \left( \!\! \psi^{(n-1)}(x) \! + \! \dfrac{(-1)^{n-1} (n-1)!}{x^n} \! \right) \!\!
+\!(-1)^{n} \!\! \left(\!\!\dfrac{x^2}{2}-\!\dfrac{x}{2}-\! \dfrac{1}{12}\! \right)\!\dfrac{ n!}{x^{n+1}} \nonumber\\
&+(-1)^{n+1}\!\!\left(\!\! x\!+\! \dfrac{1}{2}\!\right)\!\! \dfrac{(n+1)!}{nx^n}
\!+\!(-1)^n\dfrac{(n+1)!}{(2n-2)x^{n-1}}\!+\!\sigma_n(x)\!+\!\tau_n(x),\quad n\geq 2,
\end{align}
where $\sigma_n(x)$ and $\tau_n(x)$ are defined in \eqref{value of sigma} and \eqref{value of tau} respectively. It is clear from \eqref{value of sigma} and \eqref{value of tau} that
\begin{align*}
\lim_{x \to \infty} x^{n-1} \sigma_n(x)\to 0 \quad \text{and} \quad \lim_{x \to \infty} x^{n-1} \tau_n(x) \to 0.
\end{align*}
Multiplying both sides of \eqref{difference of poly-double gamma and poly gamma} by $x^{n-1}$ and taking the limit as $x \to \infty$, we obtain
\newline
$
\displaystyle
\lim_{x \to \infty}x^{n-1} \psi_2^{(n)}(x)-  \lim_{x \to \infty} x^{n-1}  \psi^{(n)}(x)
$
\begin{align*}
=\!\!-\!\lim_{x \to \infty} x^n \psi^{(n)}(x)\!-\!(n+1)\!\!\lim_{x \to \infty} x ^{n-1}& \psi^{(n-1)}(x)+\!\!  \dfrac{ (-1)^{n}\! n!}{2}
\!+\!\dfrac{(-1)^{n+1}\!(n+1)!}{n}\!+ \!\!(-1)^n\!\dfrac{(n+1)!}{(2n-2)}\\
&+ \!\!\lim_{x \to \infty} x^{n-1} \sigma_n(x)\!+\!\lim_{x \to \infty} x^{n-1} \tau_n(x),\quad n\geq 2.
\end{align*}
Using \eqref{asymptotic expansion for polygamma limit form}, we get
\newline
$
\displaystyle
\lim_{x \to \infty}x^{n-1} \psi_2^{(n)}(x)
$
\begin{align*}
=\!\!(\!-1\!)^{n}\! (n-1)!\!+\!(-1)^{n-1}\!(n\!+\!1)(n\!-\!2)!\!+\! \dfrac{(-1)^{n} n!}{2}\!+\! \dfrac{(-1)^{n+1}(n\!+\!1)!}{n}\!+\! (-1)^n\!\dfrac{(n\!+\!1)!}{(2n\!-\!2)}, \ n\geq 2.
\end{align*}
By doing simple calculaitons, we obtain \eqref{asymptotic expansion for poly-double gamma function limit value}.
\end{proof}
The graphical representation of \eqref{asymptotic expansion for poly-double gamma function limit value} is provided below, where we have considered the case $n=2$.
\begin{figure}[H]
\footnotesize
\stackunder[5pt]{\includegraphics[scale=0.8]{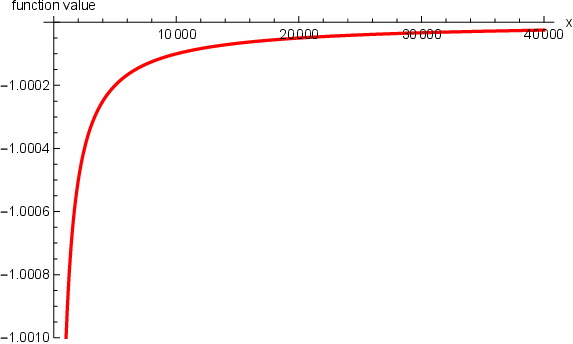}}{}
\caption{Graph of $x^{n-1}\psi_2^{(n)}(x)$ for $n=2$ and $x\in(0,40000]$}
\label{fig:limit value of poly-double gamma function}
\end{figure}
In Figure \ref{fig:limit value of poly-double gamma function}, we observe that for a large value of $x$, (let $x=40000$), as $x$ increases further, the function $x^{n-1}\psi_2^{(n)}(x)$ converges to $(-1)^{n-1} (n-2)$. For $n=2$, this evaluates to $-1$. Hence, as we can see in Figure \ref{fig:limit value of poly-double gamma function}, as $x$ approaches infinity, $x \psi_2^{(2)}(x)$ settles at $-1$.

\section{Complete monotonicity}\label{complete monotonicity}
In this section, we determine the range of parameters for which the function involving the poly-double gamma function is completely monotonic. As a result, we derive bounds for its ratio and, using these bounds, obtain various related inequalities. We begin with the following lemma, which plays a key role in proving Theorem \ref{complete monotonicity of F(x;omega) theorem}.
\begin{lemma}\label{lemma}
Let the function $f_n(t)$ be defined as
\begin{align}\label{value of f_n(t)}
f_n(t) = \dfrac{t^{n-1}}{(1-e^{-t})^2}, \quad n \geq 2.
\end{align}
Then, for all $a>0$, the integral
\begin{align}\label{value of integral I}
I_1(a;n) = \int_0^1 \left[(2n-3)x^2-1\right] f_n(a(1+x)) f_n(a(1-x)) dx,
\end{align}
is negative.
\end{lemma}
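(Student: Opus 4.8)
The plan is to reduce the estimate to a Chebyshev-type inequality against a one-parameter family of positive weights. The starting point is a factorization coming from the auxiliary function $q(t)=t/(1-e^{-t})$. Since $1-e^{-t}=t/q(t)$, we have $f_n(t)=t^{n-3}q(t)^2$, and therefore
\[
f_n(a(1+x))\,f_n(a(1-x)) = a^{2n-6}(1-x^2)^{n-3}Q(x), \qquad Q(x):=\bigl[q(a(1+x))\,q(a(1-x))\bigr]^2 .
\]
Because $a^{2n-6}>0$, proving $I_1(a;n)<0$ is equivalent to proving $J:=\int_0^1 P(x)(1-x^2)^{n-3}Q(x)\,dx<0$, where $P(x)=(2n-3)x^2-1$. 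This rewriting is the key preliminary move, since it splits the integrand into the polynomial $P$, the nonnegative base weight $(1-x^2)^{n-3}$, and the positive factor $Q$.

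First I would record two elementary facts about the base weight $d\nu(x)=(1-x^2)^{n-3}dx$. For $n\geq 3$, the substitution $u=x^2$ turns the relevant integrals into Beta functions and gives $\int_0^1 x^2(1-x^2)^{n-3}\,dx=\frac{1}{2n-3}\int_0^1(1-x^2)^{n-3}\,dx$, that is, $\int_0^1 P\,d\nu=0$. In other words, the coefficient $2n-3$ in the definition of $P$ is calibrated precisely so that $P$ has zero mean against $d\nu$. The borderline case $n=2$ must be handled separately, but there it is immediate: $P(x)(1-x^2)^{-1}\equiv -1$, so $J=-\int_0^1 Q(x)\,dx<0$.

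The technical heart is the monotonicity of the positive factor $Q$. I would show that $Q$ is strictly decreasing on $(0,1)$ by differentiating its logarithm: writing $\rho=(\log q)'$, one finds $\frac{d}{dx}\log Q(x)=2a\bigl[\rho(a(1+x))-\rho(a(1-x))\bigr]$, so it suffices that $\rho(t)=\frac{1}{t}-\frac{1}{e^t-1}$ be decreasing. This reduces to $\rho'(t)<0$, i.e. $t^2e^t<(e^t-1)^2$, which after extracting a positive square root is exactly $t<2\sinh(t/2)$, true for all $t>0$ by the Taylor expansion of $\sinh$. Since $a(1+x)>a(1-x)$ for $x\in(0,1)$, this yields that $Q$ decreases on $(0,1)$, while $P$ visibly increases there.

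Finally I would assemble the pieces. Let $x_0=1/\sqrt{2n-3}$ be the unique root of $P$ in $(0,1)$. Using $\int_0^1 P\,d\nu=0$ (for $n\geq 3$) I can subtract a constant and write $J=\int_0^1 P(x)\bigl[Q(x)-Q(x_0)\bigr](1-x^2)^{n-3}\,dx$. On $(0,x_0)$ one has $P<0$ and $Q-Q(x_0)>0$, while on $(x_0,1)$ both signs reverse; in either case the product $P(x)[Q(x)-Q(x_0)]$ is nonpositive, and strictly negative away from $x_0$. Since $(1-x^2)^{n-3}\geq 0$, integration gives $J<0$, which is the claim (equivalently, this is Chebyshev's integral inequality for the oppositely ordered pair $P$, $Q$ against $d\nu$, with $\int P\,d\nu=0$). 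The main obstacle I anticipate is the monotonicity of $Q$, equivalently of $\rho$, since everything downstream rests on $P$ and $Q$ being oppositely ordered; the elementary estimate $t<2\sinh(t/2)$ is what makes this decisive step go through cleanly.
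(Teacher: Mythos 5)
Your proposal is correct, and in outline it coincides with the paper's own proof: your factorization $P(x)\,(1-x^2)^{n-3}\,Q(x)$ is exactly the paper's $g(x;n)\,(h(x;a))^2$ (with $Q=h^2$), your zero-mean observation for $P$ against $(1-x^2)^{n-3}\,dx$ is the paper's identity $\int_0^1 g(x;n)\,dx=0$ (proved there via the exact antiderivative $-x(1-x^2)^{n-2}$ rather than Beta functions), and the final Chebyshev-type comparison of $Q$ against its value at $x_0=(2n-3)^{-1/2}$ is the same. The genuine difference lies in how the monotonicity of $Q$ is established, and here your route is not merely an alternative but is actually the argument the paper needs. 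The paper differentiates $v=\log h$ and displays
\begin{align*}
\frac{\partial v}{\partial x} = -\left(\frac{2x}{1-x^2} + a\,\frac{e^{-a(1-x)}-e^{-a(1+x)}}{\left(1-e^{-a(1-x)}\right)\left(1-e^{-a(1+x)}\right)}\right),
\end{align*}
calling the bracket clearly positive; but this formula has a sign error, since a correct computation gives
\begin{align*}
\frac{\partial v}{\partial x} = -\frac{2x}{1-x^2} + a\,\frac{e^{-a(1-x)}-e^{-a(1+x)}}{\left(1-e^{-a(1-x)}\right)\left(1-e^{-a(1+x)}\right)},
\end{align*}
a \emph{difference} of two positive terms of comparable size whose negativity is far from obvious (for $a=1$, $x=1/2$ the true value is about $-0.08$, whereas the paper's expression evaluates to about $-2.59$). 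Your reduction to the monotonicity of $\rho(t)=1/t-1/(e^t-1)$, proved by $t<2\sinh(t/2)$, is exactly equivalent to the negativity of the corrected derivative, since $\partial v/\partial x = a\left[\rho(a(1+x))-\rho(a(1-x))\right]$; so your proposal supplies the justification that the published proof, as written, lacks. Two smaller points also favor your version: you treat $n=2$ separately, which is necessary because the zero-mean identity fails there (the paper's claim $\int_0^1 g\,dx=0$ is false at $n=2$, where the integral equals $-1$, though the lemma still holds trivially since $P(x)(1-x^2)^{-1}\equiv-1$), and you address strictness of the final inequality, which the paper leaves implicit.
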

\begin{proof}
Let us define the functions
\begin{align}\label{value of g(x;n)}
g(x;n) &= \left[(2n-3)x^2-1\right](1-x^2)^{n-3},
\end{align}
and
\begin{align}\label{value of h}
h(x;a) &= \dfrac{a(1-x)}{1-e^{-a(1-x)}} \dfrac{a(1+x)}{1-e^{-a(1+x)}}.
\end{align}
Then, using \eqref{value of g(x;n)} and \eqref{value of h}, the integral $I_1(a;n)$ given in \eqref{value of integral I} can be rewritten as
\begin{align*}
I_1(a;n) = a^{2(n-3)} \int_0^1 g(x;n) \left(h(x;a)\right)^2 dx.
\end{align*}
Consider the function $v$ defined as
\begin{align}\label{value of log h}
v = \log h(x;a).
\end{align}
Substituting $h(x;a)$ from \eqref{value of h} into \eqref{value of log h}, we obtain
\begin{align*}
v = 2\log a + \log(1-x) + \log(1+x)- \log(1-e^{-a(1-x)}) - \log(1-e^{-a(1+x)}).
\end{align*}
Differentiating $v$ with respect to $x$, we get
\begin{align*}
\dfrac{\partial v}{\partial x} = -\underbrace{\left(\dfrac{2x}{1-x^2} + a \dfrac{e^{-a(1-x)} - e^{-a(1+x)}}{(1-e^{-a(1-x)})(1-e^{-a(1+x)})}\right)}_{v_1(x;a)}.
\end{align*}
	Clearly, the function $v_1(x;a)$ is positive for all $a > 0$ and $x \in (0,1)$. Thus, we conclude that $h(x;a)$ is decreasing function. Now, we have
	\begin{align*}
		\dfrac{d}{dx} \left(h(x;a)\right)^2 &= 2 h(x;a) h^{'}(x;a).
	\end{align*}
	Since $h(x;a)$ is positive and $h^{'}(x;a)$ is negative, it follows that $\left(h(x;a)\right)^2$ is also decreasing on $(0,1)$. That is
	\begin{equation}
		\begin{aligned}\label{inequalities involving h}
			\left(h(x;a)\right)^2  \geq \left(h((2n-3)^{-1/2};a)\right)^2, \quad \text{if} \quad  0 < x \leq  (2n-3)^{-1/2},\\
			\text{and} \quad  \left(h(x;a)\right)^2 \leq  \left(h((2n-3)^{-1/2};a)\right)^2, \quad \text{if} \quad (2n-3)^{-1/2} \leq  x < 1.
		\end{aligned}
	\end{equation}
	For the function $g(x;n)$ defined in \eqref{value of g(x;n)}, we have
	\begin{equation}\label{sign of g in (0,1)}
		\begin{aligned}
			g(x;n)  < 0, \quad \text{if} \quad  0 < x < (2n-3)^{-1/2},\\
			\text{and} \quad g(x;n) > 0, \quad \text{if} \quad (2n-3)^{-1/2} < x < 1.
		\end{aligned}
	\end{equation}
	Thus, using \eqref{inequalities involving h} and \eqref{sign of g in (0,1)} we obtain
	\begin{align*}
		g(x;n) (h(x;a))^2 \leq g(x;n) \left(h\left((2n-3)^{-1/2};a\right)\right)^2, \quad x\neq (2n-3)^{-1/2}, x\in (0,1).
	\end{align*}
	Integrating this inequality over $(0,1)$ yields
	\begin{align}\label{inequality involving integral of g}
		\int_0^1 g(x;n) (h(x;a))^2 dx \leq \left(h\left((2n-3)^{-1/2};a\right)\right)^2 \int_0^1 g(x;n) dx.
	\end{align}
	Now using \eqref{value of g(x;n)}, we can write
	\begin{align*}
		\int_0^1 g(x;n) dx &= \int_0^1 \dfrac{d}{dx} \left(-x(1-x^2)^{n-2}\right) dx=0.
	\end{align*}
	Therefore, \eqref{inequality involving integral of g} simplifies to
	\begin{align*}
		\int_0^1 g(x;n) (h(x;a))^2 dx \leq 0.
	\end{align*}
	Multiplying both sides by $a^{2(n-3)}$, we obtain the required result.
\end{proof}
We have provided a graphical illustration to support Lemma \ref{lemma}, as shown below.
\begin{figure}[H]
	\footnotesize
	\stackunder[5pt]{\includegraphics[scale=0.8]{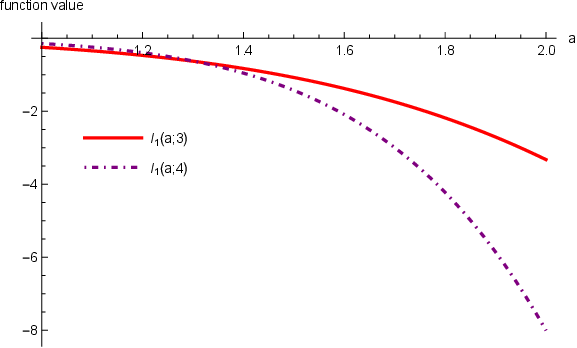}}{}
	\caption{Graph of $I_1(a;n)$ for $n=3, 4$ and $a\in(1,2)$}
	\label{fig:negativity of the integral}
\end{figure}
In Figure \ref{fig:negativity of the integral}, we plot the integral $I_1(a;n)$ for $n=3,4$ and $a\in (1,2)$. It is clear that $I_1(a;n)$ remains negative for both values of $n$. Although we demonstrated for $n=3$ and $n=4$ with $a\in(1,2)$, the result holds for all $n \geq 3$ and for any $a\in(0,\infty)$.

We are now in a position to establish the main theorem of this section, which is presented below.

\begin{theorem}\label{complete monotonicity of F(x;omega) theorem}
Let $n\in \mathbb{N}$ such that $n \geq 3$, and let $\omega \in \mathbb{R}$. Define the function
\begin{align*}
F_n(x; \omega) = (\psi_2^{(n)}(x))^2 - \omega \psi_2^{(n-1)}(x) \psi_2^{(n+1)}(x), \quad x \in (0,\infty),
\end{align*}
where $\psi_2^{(n)}(x)$ denotes the poly-double gamma function defined in \eqref{poly-double gamma function series form}. Then, the following statements hold.
\begin{enumerate}
\rm	\item \label{F_n(x;omega) cm} The function $F_n(x; \omega)$ exhibits complete monotonicity with respect to $x$ on $(0, \infty)$ if, and only if, $\omega \leq \dfrac{n - 2}{n - 1}$.
\rm	\item \label{F_n(x;omega_m) cm} The function $-F_n(x; \omega)$ exhibits complete monotonicity with respect to $x$ on $(0,\infty)$ if, and only if, $\omega \geq \dfrac{n}{n+1}$.
\end{enumerate}
\end{theorem}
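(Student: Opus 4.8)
The plan is to characterize complete monotonicity through the sign of the representing density in a Laplace--Bernstein representation of $F_n$. First I would do the sign bookkeeping: writing $\psi_2^{(m)}(x)=(-1)^{m+1}g_m(x)$ with $g_m(x)=(-1)^{m+1}\psi_2^{(m)}(x)$, all the alternating signs cancel and $F_n(x;\omega)=g_n(x)^2-\omega\,g_{n-1}(x)g_{n+1}(x)$. By Theorem \ref{poly double cm thm} each $g_m$ is the Laplace transform of the positive density $t^{m}/(1-e^{-t})^2$, i.e. of $f_{m+1}$ in the notation of \eqref{value of f_n(t)}; since products of Laplace transforms are transforms of convolutions, $F_n(x;\omega)=\int_0^\infty e^{-xu}\Phi_n(u;\omega)\,du$ with $\Phi_n=(f_{n+1}\ast f_{n+1})-\omega\,(f_n\ast f_{n+2})$. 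Substituting $s=a(1+x)$, $u-s=a(1-x)$ with $u=2a$ and symmetrizing the non-symmetric term under $x\mapsto-x$, the density collapses to $\Phi_n(2a;\omega)=2a^{2n-3}J(a;\omega)$, where
\[
J(a;\omega)=\int_0^1 (1-x^2)^{n-3}\big[(1-x^2)-\omega(1+x^2)\big]\,h(x;a)^2\,dx
\]
and $h(x;a)$ is exactly the function in \eqref{value of h}. By Theorem \ref{Bernstein theorem}, it then suffices to show $J(a;\omega)\ge0$ for all $a>0$ (giving complete monotonicity of $F_n$) and $J(a;\omega)\le0$ for all $a>0$ (giving that of $-F_n$); note $J$ is affine and strictly decreasing in $\omega$, with $\partial_\omega J=-\int_0^1(1-x^2)^{n-3}(1+x^2)h^2\,dx<0$.

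For the sufficiency in (1) I would evaluate at $\omega=\tfrac{n-2}{n-1}$, where the bracket equals $\tfrac{1}{n-1}\big[1-(2n-3)x^2\big]$, so $J\big(a;\tfrac{n-2}{n-1}\big)=-\tfrac{1}{n-1}\int_0^1 g(x;n)\,h(x;a)^2\,dx=-\tfrac{1}{n-1}a^{-2(n-3)}I_1(a;n)>0$ by Lemma \ref{lemma}, with $g(x;n)$ as in \eqref{value of g(x;n)}. Monotonicity in $\omega$ then extends $J\ge0$ to every $\omega\le\tfrac{n-2}{n-1}$. The sufficiency in (2) is the real obstacle: at $\omega=\tfrac{n}{n+1}$ the bracket becomes $\tfrac{1}{n+1}\big[1-(2n+1)x^2\big]$, and the naive reuse of the sliding argument of Lemma \ref{lemma} fails, because $h^2$ is \emph{decreasing} while the required estimate now needs an \emph{increasing} factor. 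The fix I would use is to absorb two powers of $(1-x^2)$ into $h^2$: with $h(x;a)^2=a^4(1-x^2)^2\rho_+\rho_-$, where $\rho_\pm=(1-e^{-a(1\pm x)})^{-2}$, one gets
\[
J\Big(a;\tfrac{n}{n+1}\Big)=-\frac{a^4}{n+1}\int_0^1 \widetilde g(x)\,\rho_+\rho_-\,dx,\qquad \widetilde g(x)=(1-x^2)^{n-1}\big[(2n+1)x^2-1\big]=\frac{d}{dx}\big[-x(1-x^2)^{n}\big].
\]
Here $\int_0^1\widetilde g\,dx=0$ and $\widetilde g$ is negative-then-positive, and the new computation is that $\rho_+\rho_-$ is \emph{increasing} in $x$, since $\tfrac{d}{dx}\log(\rho_+\rho_-)=2a\big[q(a(1-x))-q(a(1+x))\big]>0$ with $q(t)=1/(e^t-1)$ decreasing. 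The sliding comparison of Lemma \ref{lemma}, now run with the increasing factor $\rho_+\rho_-$ against the vanishing-mean weight $\widetilde g$, yields $\int_0^1\widetilde g\,\rho_+\rho_-\,dx\ge (\rho_+\rho_-)\big|_{x=(2n+1)^{-1/2}}\int_0^1\widetilde g\,dx=0$, hence $J\big(a;\tfrac{n}{n+1}\big)\le0$, and monotonicity in $\omega$ extends this to all $\omega\ge\tfrac{n}{n+1}$.

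For the necessity I would avoid infimum/supremum bookkeeping and argue by asymptotics, using only that a completely monotone function is nonnegative. From Theorem \ref{limit formula thm} together with $\psi^{(m)}(x)\sim(-1)^{m-1}(m-1)!\,x^{-m}$, one finds as $x\to\infty$ that $F_n(x;\omega)\sim (n-3)!\,(n-1)!\,\big(\tfrac{n-2}{n-1}-\omega\big)x^{2-2n}$, which is negative for $\omega>\tfrac{n-2}{n-1}$, contradicting $F_n\ge0$ and forcing $\omega\le\tfrac{n-2}{n-1}$ in (1). Symmetrically, the leading $k=0$ term of \eqref{poly-double gamma function series form} gives $\psi_2^{(m)}(x)\sim(-1)^{m+1}m!\,x^{-(m+1)}$ as $x\to0^+$, whence $F_n(x;\omega)\sim (n-1)!\,(n+1)!\,\big(\tfrac{n}{n+1}-\omega\big)x^{-2n-2}$; for $\omega<\tfrac{n}{n+1}$ this is positive near $0$, so $-F_n$ cannot be nonnegative and $\omega\ge\tfrac{n}{n+1}$ is forced in (2).

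The step I expect to be hardest is the sufficiency in (2): recognizing that the correct monotone factor is $\rho_+\rho_-$ (increasing) rather than $h^2$ (decreasing), and that the accompanying exponent shift from $(1-x^2)^{n-3}$ to $(1-x^2)^{n-1}$ is precisely what restores the vanishing mean $\int_0^1\widetilde g=0$ needed to close the sliding estimate. Everything else is either the routine convolution/change-of-variables reduction, a direct appeal to Lemma \ref{lemma}, or the two symmetric asymptotic computations at $\infty$ and $0$.
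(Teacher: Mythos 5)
Your proposal is correct, and for the sufficiency in part (2) it takes a genuinely different route from the paper; I verified the key new computations (the identity $\widetilde g=\frac{d}{dx}\left[-x(1-x^2)^n\right]$ with zero mean, the monotonicity $\frac{d}{dx}\log(\rho_+\rho_-)=2a\left[q(a(1-x))-q(a(1+x))\right]>0$, and the resulting sliding estimate) and they hold. For part (1) your argument is essentially the paper's: the same convolution reduction to the density $2a^{2n-3}J(a;\omega)$ and the same appeal to Lemma \ref{lemma} at $\omega=\frac{n-2}{n-1}$; your affine monotonicity of $J$ in $\omega$ plays exactly the role of the paper's convex-cone decomposition $F_n(x;\omega)=F_n\left(x;\frac{n-2}{n-1}\right)+\left(\frac{n-2}{n-1}-\omega\right)(-1)^n\psi_2^{(n-1)}(x)\,(-1)^{n+2}\psi_2^{(n+1)}(x)$, and your necessity-by-asymptotics at infinity is the paper's limit argument via Theorem \ref{limit formula thm} in a cleaner form (likewise your $x\to0^+$ argument via dominance of the $k=0$ term of \eqref{poly-double gamma function series form} streamlines the paper's recurrence-based limit). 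The real divergence is that the paper abandons the integral picture for part (2): it substitutes the series \eqref{poly-double gamma function series form} into $-F_n\left(x;\frac{n}{n+1}\right)$, applies Lagrange's identity \eqref{lagrange identity} to obtain the explicit double sum $(n!)^2\sum_{k<j}(k-j)^2(1+k)(1+j)\left[(x+k)(x+j)\right]^{-n-2}$, and verifies complete monotonicity by term-by-term differentiation. You instead stay inside the Laplace-density framework, absorb $(1-x^2)^2$ from $h^2$ of \eqref{value of h} into the weight, and rerun the Chebyshev-type comparison with the increasing factor $\rho_+\rho_-$ in place of the decreasing $h^2$. What each buys: the paper's Lagrange-identity proof is more elementary and produces a manifestly positive explicit representation; yours is structurally unified, exhibiting both constants $\frac{n-2}{n-1}$ and $\frac{n}{n+1}$ as the unique values of $\omega$ at which the relevant weight acquires vanishing mean against the appropriate monotone factor, and it shows that the sliding technique of Lemma \ref{lemma}, which superficially fails for part (2), succeeds after the exponent shift.
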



\begin{proof}
Let us suppose that $F_n(x; \omega)$ is completely monotonic for all $n \geq 3$. Then, by Definition \ref{cmf definition}, we have
\begin{align*}
F_n(x; \omega) > 0 \Longleftrightarrow   \omega < \dfrac{(\psi_2^{(n)}(x))^2}{\psi_2^{(n-1)}(x) \psi_2^{(n+1)}(x)},
\end{align*}
which can be rewritten as
\begin{align*}
 \omega <  \dfrac{(x^{n-1} \psi_2^{(n)}(x))^2}{x^{n-2} \psi_2^{(n-1)}(x)  x^n \psi_2^{(n+1)}(x)}.
\end{align*}
Taking the limit as $x \to \infty$ on both sides and utilizing the asymptotic limits given in \eqref{asymptotic expansion for poly-double gamma function limit value} on the right-hand side, we obtain
\begin{align*}
\omega\leq  \lim_{x\to\infty} \dfrac{(x^{n-1} \psi_2^{(n)}(x))^2}{x^{n-2} \psi_2^{(n-1)}(x)  x^n \psi_2^{(n+1)}(x)} 	&= \dfrac{n-2}{n-1}.
\end{align*}
Now, we assume $\omega \leq \dfrac{n - 2}{n - 1}$. To prove $F_n(x; \omega)$ is completely monotonic, we rewrite the function as follows
\begin{align}\label{F_n(x;omega) in terms of n}
F_n(x; \omega ) = F_n\left(x; \dfrac{n-2}{n-1}\right) + \left(\dfrac{n-2}{n-1} - \omega \right) (-1)^n  \psi_2^{(n-1)}(x) (-1)^{n+2} \psi_2^{(n+1)}(x).
\end{align}
From Theorem \ref{poly double cm thm}, $(-1)^{n+1} \psi_2^{(n)}(x)$ is completely monotone. Consequently, the function $(-1)^n \psi_2^{(n-1)}(x) (-1)^{n+2} \psi_2^{(n+1)}(x)$ is also completely monotonic, as complete monotonicity is preserved under multiplication. Moreover, the class of completely monotonic functions forms a convex cone. Therefore, from \eqref{F_n(x;omega) in terms of n}, it is sufficient to show that $F_n\left(x; \dfrac{n-2}{n-1}\right)$ is  completely monotonic on $(0, \infty)$ in order to get the complete monotonicity of $F_n(x; \omega)$.
\begin{align}\label{F(x;omega) in terms of n-2/n-1}
F_n\left(x; \dfrac{n-2}{n-1}\right) = (\psi_2^{(n)}(x))^2 - \left(\dfrac{n-2}{n-1} \right)\psi_2^{(n-1)}(x) \psi_2^{(n+1)}(x).
\end{align}
Substituting the integral representation of $\psi_2^{(n)}(x)$ given in \eqref{integral form of poly-double gamma function} into \eqref{F(x;omega) in terms of n-2/n-1}, we obtain
\newline
$
\displaystyle
F_n\left(x; \dfrac{n-2}{n-1}\right)
$
\begin{align*}
= \underbrace{\left( \int_0^\infty e^{-xt} \dfrac{t^n}{(1 - e^{-t})^2} dt \right)^2}_{I_2(x;n)}- \left(\dfrac{n-2}{n-1}\right) \underbrace{\left(\int_0^\infty e^{-xt} \dfrac{t^{n-1}}{(1 - e^{-t})^2} dt  \int_0^\infty e^{-xt} \dfrac{t^{n+1}}{(1 - e^{-t})^2} dt \right)}_{I_3(x;n)}.
\end{align*}
Using the convolution of the Laplace transform in $I_2(x; n)$ and $I_3(x; n)$, we get

\begin{align*}
F_n\left(x; \dfrac{n-2}{n-1}\right) &= \int_0^\infty e^{-xt} \left(\int_0^t \dfrac{u^n}{(1 - e^{-u})^2} \dfrac{(t-u)^n}{(1 - e^{-(t-u)})^2} du \right) dt \\
& - \left(\dfrac{n-2}{n-1}\right) \int_0^\infty e^{-xt} \left(\int_0^t \dfrac{u^{n+1}}{(1 - e^{-u})^2} \dfrac{(t-u)^{n-1}}{(1 - e^{-(t-u)})^2} du \right) dt.
\end{align*}
Rewriting it with the help of \eqref{value of f_n(t)} gives
\begin{align}\label{value of F_n(x; n-2/n-1)}
F_n\left(x; \dfrac{n-2}{n-1}\right) &= \int_0^\infty e^{-xt} I_4(t;n) dt,
\end{align}
where
\begin{equation}\label{value of I_4}
I_4(t;n) = \int_0^t \left(t - u \dfrac{2n-3}{n-1}\right) u f_n(u) f_n(t-u) du.
\end{equation}
Substituting $u= \dfrac{t}{2}(1+x)$ into \eqref{value of I_4} and using \eqref{value of f_n(t)}, we obtain
\begin{equation}\label{value of I_2 (1)}
I_4(t;n) = \dfrac{t^3}{8(n-1)} \int_{-1}^1 \left[1 - (2n-4)x - (2n-3)x^2\right] f_n\left(\dfrac{t}{2}(1+x)\right) f_n\left(\dfrac{t}{2}(1-x)\right) dx,
\end{equation}
where
\begin{align*}
f_n\left(\dfrac{t}{2}(1+x)\right) f_n\left(\dfrac{t}{2}(1-x)\right)= \dfrac{t^{2n-2}(1-x^2)^{n-1}}{4^{n-1}\left(1-e^{-\frac{t}{2}(1+x)}-e^{-\frac{t}{2}(1-x)}+e^{-t}\right)}.
\end{align*}
Clearly, $f_n\left(\dfrac{t}{2}(1+x)\right) f_n\left(\dfrac{t}{2}(1-x)\right)$ is an even function in $x$. It follows that
\begin{align*}
\int_{-1}^1 x f_n\left(\dfrac{t}{2}(1+x)\right) f_n\left(\dfrac{t}{2}(1-x)\right) dx = 0.
\end{align*}
Thus \eqref{value of I_2 (1)} simplifies to
\begin{align*}
I_4(t;n) = & - \dfrac{t^3}{4(n-1)} \int_0^1 \left[(2n-3)x^2-1\right] f_n\left(\dfrac{t}{2}(1+x)\right) f_n\left(\dfrac{t}{2}(1-x)\right) dx.
\end{align*}
Using \eqref{value of integral I}, the given integral can be written in terms of $I_1(t/2; n)$ as
\begin{align*}
I_4(t;n) = & - \dfrac{t^3}{4(n-1)} I_1(t/2;n).
\end{align*}
By Lemma \ref{lemma}, we deduce that $I_4(t; n)>0$ for all $t > 0$ and $n \geq 3$. Hence, by applying Theorem \ref{Bernstein theorem} to \eqref{value of F_n(x; n-2/n-1)}, it follows that the function $F_n\left(x; \dfrac{n-2}{n-1}\right)$ is completely monotonic on $(0, \infty)$ for all $n \geq 3$. Consequently, $F_n(x; \omega)$ is completely monotonic for all $\omega \leq \dfrac{n-2}{n-1}$,  which completes the proof of first part.

To prove the second part of Theorem \ref{complete monotonicity of F(x;omega) theorem}, suppose that $-F_n(x; \omega)$ is completely monotonic. Then, by Definition \ref{cmf definition}, it must be positive, which implies that
\begin{align}\label{poly double gamma less than omega_m}
\omega>	\dfrac{(\psi_2^{(n)}(x))^2}{\psi_2^{(n-1)}(x) \psi_2^{(n+1)}(x)}.
\end{align}
Applying the recurrence relation for $\psi_2^{(n)}(x)$ from \eqref{recurrence relation of poly-double gamma function} to the left-hand side of \eqref{poly double gamma less than omega_m}, we obtain
\begin{align*}
\omega > \dfrac{( \psi_2^{(n)}(x+1)+\psi^{(n)}(x))^2}{(\psi_2^{(n-1)}(x+1)+\psi^{(n-1)}(x))(\psi_2^{(n+1)}(x+1)+\psi^{(n+1)}(x))}.
\end{align*}
Next, using the recurrence relation for the polygamma function from \eqref{recurrence relation for polygamma} on the left-hand side and taking the limit as $x \to 0$, we obtain
\begin{align*}
\omega\geq \lim_{x \to 0}	\dfrac{(\psi_2^{(n)}(x))^2}{\psi_2^{(n-1)}(x) \psi_2^{(n+1)}(x)} =  \dfrac{n}{n+1}.
\end{align*}

Conversely, let us take $\omega \geq \dfrac{n}{n+1}$, and aim is to show that $-F_n(x; \omega)$ is completely monotonic. We begin by expressing $-F_n(x; \omega)$ as
\begin{align}\label{-F_n(x;omega) in terms of n}
-F_n(x; \omega) = -F_n\left(x; \dfrac{n}{n+1}\right) + \left(\omega - \dfrac{n}{n+1}\right) (-1)^n \psi_2^{(n-1)}(x)  (-1)^n \psi_2^{(n+1)}(x).
\end{align}
As complete monotonicity remains preserved under multiplication, Theorem \ref{poly double cm thm} confirms the complete monotonicity of $(-1)^n \psi_2^{(n-1)}(x)  (-1)^n \psi_2^{(n+1)}(x)$. Further the class of completely monotone functions constitutes a convex cone, it follows from \eqref{-F_n(x;omega) in terms of n} that to prove the complete monotonicity of $-F_n(x; \omega)$, it is sufficient to prove that $-F_n\left(x; \dfrac{n}{n+1}\right)$ is completely monotone. Thus, we have
\begin{align}\label{value of -F(x,n/n+1)}
-F_n\left(x; \dfrac{n}{n+1}\right) = \left(\dfrac{n}{n+1}\right) \psi_2^{(n-1)}(x) \psi_2^{(n+1)}(x) - (\psi_2^{(n)}(x))^2.
\end{align}
Using \eqref{poly-double gamma function series form} in \eqref{value of -F(x,n/n+1)}, we get
\newline
$
\displaystyle
-F_n\left(x; \dfrac{n}{n+1}\right)
$
\begin{align*}
=\!\! \left(\dfrac{n}{n+1}\right) \!\left(\!(n-1)! \sum_{k=0}^{\infty}\dfrac{(1+k)}{(x+k)^{n}} \! \right) \! \left( \! (n+1)! \sum_{k=0}^{\infty}\dfrac{(1+k)}{(x+k)^{n+2}} \! \right) \! - \left(\! n! \sum_{k=0}^{\infty}\dfrac{(1+k)}{(x+k)^{n+1}} \! \right)^2.
\end{align*}
By a simple computation, we obtain
\begin{align*}
-F_n\left(x; \dfrac{n}{n+1}\right) = (n!)^2\left(\sum_{k=0}^{\infty}\dfrac{(1+k)}{(x+k)^{n}} \sum_{k=0}^{\infty}\dfrac{(1+k)}{(x+k)^{n+2}} - \left(\sum_{k=0}^{\infty}\dfrac{(1+k)}{(x+k)^{n+1}}\right)^2\right).
\end{align*}
Using Lagrange's identity given in \eqref{lagrange identity}, the expression simplifies to
\begin{align*}
-F_n\left(x; \dfrac{n}{n+1}\right) = (n!)^2 \sum_{k=0}^{\infty} \sum_{j=k+1}^{\infty} (k-j)^2 (1+k)(1+j)[(x+k)(x+j)]^{-n-2}.
\end{align*}
Differentiating up to the $p$th order yields
\newline
$
\displaystyle
(-1)^p \dfrac{d^p}{dx^p} \left(-F_n\left(x; \dfrac{n}{n+1}\right)\right)
$
\begin{align*}
=\!(n!)^2 \!\sum_{k=0}^{\infty}\! \sum_{j=k+1}^{\infty} \! \! \! (k-j)^2 \! \sum_{r=0}^{p}\! \! \binom{p}{r} \! (x+k)^{-n-2-r} (x+j)^{-n-2-p+r}\!\!\left(\!\prod_{s=0}^{r-1} (n+2+s) \!\! \!\! \prod_{s=0}^{p-r-1} \!\!\! (n+2+s) \!\! \right)\!\!.
\end{align*}

The right-hand side of above equation is always positive, ensuring the complete monotonicity of the function $-F_n\left(x; \dfrac{n}{n+1}\right)$, which completes the proof of second part.
\end{proof}
The graphical interpretation of the first and second parts of Theorem \ref{complete monotonicity of F(x;omega) theorem} is presented below.
\begin{figure}[H]
\centering
\begin{minipage}{0.48\textwidth}
\centering
\includegraphics[scale=0.79]{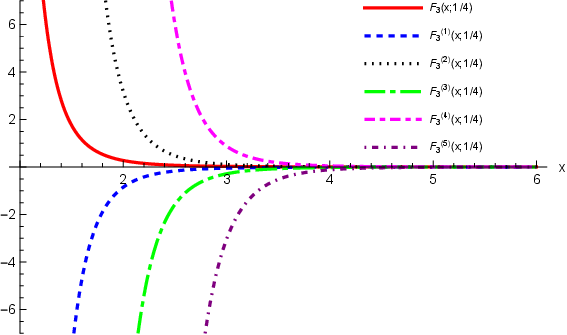}
\footnotesize
Graph of $F_3(x; 1/4)$ and its derivatives.
\caption{Complete monotonicity of $F_3(x; 1/4)$}
\label{fig:Complete monotonicity of $F_3(x; 1/4)$}
\end{minipage} \hfil
\begin{minipage}{0.48\textwidth}
\centering

\includegraphics[scale=0.79]{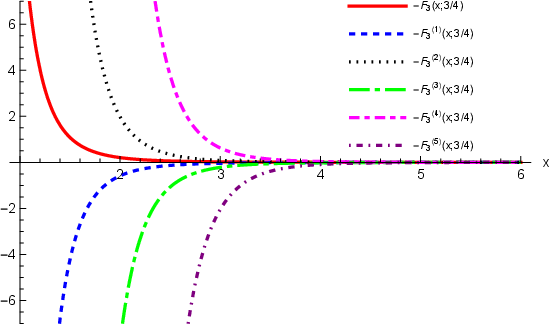}
\footnotesize

Graph of $-F_3(x; 3/4)$ and its derivatives
\caption{Complete monotonicity of $-F_3(x; 3/4)$}
\label{fig:Complete monotonicity of $-F_3(x; 3/4)$}
\end{minipage}
\end{figure}
For $n = 3$ and $\omega \leq \dfrac{1}{2}$, Figure \ref{fig:Complete monotonicity of $F_3(x; 1/4)$} demonstrates the complete monotonicity of $F_3(x; 1/4)$, as it is positive and its derivatives alternate in sign, with the first derivative being negative (as per Definition \ref{cmf definition}). Similarly, for $n = 3$ and $\omega = \dfrac{3}{4}$, Figure \ref{fig:Complete monotonicity of $-F_3(x; 3/4)$} shows the complete monotonicity of $-F_3(x; 3/4)$.
%
\begin{corollary}\label{upper-lower bound polydouble gamma corollary}
For the poly-double gamma function $\psi_2^{(n)}(x)$, defined in \eqref{poly-double gamma function series form}, the following inequality holds
\begin{align}\label{lower-upper bound poly-double gamma function}
\dfrac{n-2}{n-1}<\dfrac{(\psi_2^{(n)}(x))^2}{\psi_2^{(n-1)}(x) \psi_2^{(n+1)}(x)} < \dfrac{n}{n+1}, \quad n \geq 3, \  x > 0.
\end{align}
Moreover, both upper and lower bounds are sharp.
\end{corollary}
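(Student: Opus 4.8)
The plan is to read this directly off Theorem \ref{complete monotonicity of F(x;omega) theorem}, since the corollary merely restates the positivity encoded in complete monotonicity. First I would record the sign of the denominator: by Theorem \ref{poly double cm thm} the function $(-1)^{k+1}\psi_2^{(k)}(x)$ is completely monotone, hence strictly positive, for every integer $k \geq 2$. Applying this with $k = n-1$ and $k = n+1$ shows that $\psi_2^{(n-1)}(x)$ and $\psi_2^{(n+1)}(x)$ both carry the sign $(-1)^n$, so their product $\psi_2^{(n-1)}(x)\psi_2^{(n+1)}(x)$ is strictly positive on $(0,\infty)$. Hence the ratio in \eqref{lower-upper bound poly-double gamma function} is well defined, and multiplying or dividing by this denominator preserves the direction of inequalities.

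Next I would invoke the two parts of Theorem \ref{complete monotonicity of F(x;omega) theorem} at the boundary values of $\omega$. Taking $\omega = \frac{n-2}{n-1}$ in the first part, the function $F_n\!\left(x;\frac{n-2}{n-1}\right)$ is completely monotone, and the representation \eqref{value of F_n(x; n-2/n-1)} together with $I_4(t;n)>0$ from Lemma \ref{lemma} shows it is in fact strictly positive for every $x>0$. Rearranging $F_n\!\left(x;\frac{n-2}{n-1}\right)>0$ and dividing by the positive denominator yields the strict lower bound. Symmetrically, taking $\omega = \frac{n}{n+1}$ in the second part, the function $-F_n\!\left(x;\frac{n}{n+1}\right)$ is completely monotone, and the Lagrange-identity form obtained in the theorem's proof exhibits it as a sum of strictly positive terms; rearranging $-F_n\!\left(x;\frac{n}{n+1}\right)>0$ gives the strict upper bound.

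For sharpness I would reuse the two limiting computations already carried out inside the proof of Theorem \ref{complete monotonicity of F(x;omega) theorem}. Since $x^{n-1}\cdot x^{n-1} = x^{2n-2} = x^{n-2}\cdot x^{n}$, the power factors cancel, and the limit relation \eqref{asymptotic expansion for poly-double gamma function limit value} gives $\lim_{x\to\infty}\frac{(\psi_2^{(n)}(x))^2}{\psi_2^{(n-1)}(x)\psi_2^{(n+1)}(x)} = \frac{n-2}{n-1}$; thus the lower bound is approached arbitrarily closely as $x\to\infty$ and cannot be improved. Likewise the computation $\lim_{x\to 0^+}\frac{(\psi_2^{(n)}(x))^2}{\psi_2^{(n-1)}(x)\psi_2^{(n+1)}(x)} = \frac{n}{n+1}$, performed in the ``only if'' direction of the second part, shows the upper bound is sharp. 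I expect no genuine obstacle here: the entire content sits inside the theorem, and the only point requiring care is upgrading the mere nonnegativity supplied by complete monotonicity to the strict inequalities claimed, which is why I would point explicitly to the strictly positive integrand $I_4(t;n)$ and to the strictly positive Lagrange sum rather than simply citing complete monotonicity.
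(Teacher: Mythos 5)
Your proposal is correct and follows essentially the same route as the paper: both deduce the two strict inequalities from the positivity of the completely monotone functions $F_n\left(x;\frac{n-2}{n-1}\right)$ and $-F_n\left(x;\frac{n}{n+1}\right)$ established in Theorem \ref{complete monotonicity of F(x;omega) theorem}, and both obtain sharpness from the limiting computations (as $x\to\infty$ and $x\to 0^+$) underlying the necessity directions of that theorem. You additionally spell out two details the paper leaves implicit --- the positivity of the denominator $\psi_2^{(n-1)}(x)\psi_2^{(n+1)}(x)$ and the upgrade from nonnegativity to strict positivity via the strictly positive integrand $I_4(t;n)$ and the Lagrange sum --- which is sound and makes the argument more complete.
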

\begin{proof}
Since complete monotonicity of any function implies it's positivity, the complete monotonicity of $F_n\left(x; \dfrac{n-2}{n-1}\right)$ and $-F_n\left(x; \dfrac{n}{n+1}\right)$, established in Theorem \ref{complete monotonicity of F(x;omega) theorem}, leads to the desired result. The sharpness of bounds follows from the necessary and sufficient conditions established in first and second parts of the Theorem \ref{complete monotonicity of F(x;omega) theorem}.
\end{proof}

\begin{corollary}\label{cauchy schwarz cor}
For any $x > 0$, the following inequalities hold.
\begin{enumerate}
\rm \item For integers $n \geq 3$,
\begin{align}\label{cauchy schwarz}
\left(\sum_{k=0}^{\infty}\dfrac{k+1}{(x+k)^{n+1}} \right)^2 < \left(\sum_{k=0}^{\infty}\dfrac{k+1}{(x+k)^n} \right)\left(\sum_{k=0}^{\infty}\dfrac{k+1}{(x+k)^{n+2}} \right).
\end{align}
\rm \item For integers $n \geq  3$,
\begin{align}\label{reversed cauchy schwarz}
\left(\sum_{k=0}^{\infty}\dfrac{k+1}{(x+k)^{n+1}} \right)^2 > \dfrac{n^2 - n - 2}{n^2 - n}\left(\sum_{k=0}^{\infty}\dfrac{k+1}{(x+k)^n} \right)\left(\sum_{k=0}^{\infty}\dfrac{k+1}{(x+k)^{n+2}} \right).
\end{align}
\end{enumerate}
\end{corollary}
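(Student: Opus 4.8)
The plan is to recognize that each of the three series in \eqref{cauchy schwarz} and \eqref{reversed cauchy schwarz} is, up to an explicit factorial-and-sign factor, one of the poly-double gamma functions $\psi_2^{(n-1)}$, $\psi_2^{(n)}$, $\psi_2^{(n+1)}$. Both inequalities then reduce \emph{directly} to the sharp two-sided bound for the ratio $(\psi_2^{(n)}(x))^2/(\psi_2^{(n-1)}(x)\,\psi_2^{(n+1)}(x))$ already established in Corollary \ref{upper-lower bound polydouble gamma corollary}, so no new analytic input is needed.

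First I would abbreviate $S_m = \sum_{k=0}^{\infty}(k+1)/(x+k)^m$, noting that for $x>0$ each summand is $O(k^{1-m})$, hence $S_m$ converges whenever $m\geq n\geq 3$. Reading off the series representation \eqref{poly-double gamma function series form} at indices $n-1$, $n$, $n+1$ gives
\[
\psi_2^{(n)}(x) = (-1)^{n+1} n!\, S_{n+1},\qquad
\psi_2^{(n-1)}(x) = (-1)^{n} (n-1)!\, S_n,\qquad
\psi_2^{(n+1)}(x) = (-1)^{n} (n+1)!\, S_{n+2},
\]
where I have used $(-1)^{n+2}=(-1)^{n}$ in the last identity. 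Because every factor below carries an even power of $-1$, the signs cancel, and a short factorial computation yields
\[
\frac{(\psi_2^{(n)}(x))^2}{\psi_2^{(n-1)}(x)\,\psi_2^{(n+1)}(x)}
= \frac{(n!)^2}{(n-1)!\,(n+1)!}\cdot\frac{S_{n+1}^2}{S_n\,S_{n+2}}
= \frac{n}{n+1}\cdot\frac{S_{n+1}^2}{S_n\,S_{n+2}}.
\]

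Finally I would substitute this identity into the sharp bounds $\tfrac{n-2}{n-1} < (\psi_2^{(n)})^2/(\psi_2^{(n-1)}\psi_2^{(n+1)}) < \tfrac{n}{n+1}$ of Corollary \ref{upper-lower bound polydouble gamma corollary}. The upper bound gives $S_{n+1}^2/(S_n S_{n+2}) < 1$, i.e. $S_{n+1}^2 < S_n S_{n+2}$, which is exactly \eqref{cauchy schwarz}. The lower bound gives $S_{n+1}^2/(S_n S_{n+2}) > \tfrac{n-2}{n-1}\cdot\tfrac{n+1}{n} = \tfrac{(n-2)(n+1)}{n(n-1)} = \tfrac{n^2-n-2}{n^2-n}$, which is exactly \eqref{reversed cauchy schwarz}.

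This argument is essentially bookkeeping, so I do not anticipate a genuine obstacle. The only point requiring care is tracking the signs and the factorial prefactors in the index-shifted versions of \eqref{poly-double gamma function series form}, so that the prefactor collapses cleanly to $n/(n+1)$ and the two stated constants $1$ and $(n^2-n-2)/(n^2-n)$ emerge after this factor is cleared against the bounds of Corollary \ref{upper-lower bound polydouble gamma corollary}.
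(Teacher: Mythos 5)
Your proposal is correct and is essentially the paper's own proof: the paper likewise derives both inequalities by substituting the series representation \eqref{poly-double gamma function series form} into the two-sided sharp bound of Corollary \ref{upper-lower bound polydouble gamma corollary}. The only difference is that you carry out the sign and factorial bookkeeping (the prefactor $\tfrac{n}{n+1}$ and the algebra $\tfrac{n-2}{n-1}\cdot\tfrac{n+1}{n}=\tfrac{n^2-n-2}{n^2-n}$) explicitly, which the paper leaves implicit.
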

\begin{proof}
Substituting the series representation of $\psi_2^{(n)}(x)$ given in \eqref{poly-double gamma function series form} into the right-hand side of the inequality in \eqref{lower-upper bound poly-double gamma function}, we obtain \eqref{cauchy schwarz}. Similarly, substituting the same series into the left-hand side of \eqref{lower-upper bound poly-double gamma function}, we get \eqref{reversed cauchy schwarz}.
\end{proof}

In Corollary \ref{cauchy schwarz cor}, the inequality \eqref{cauchy schwarz} corresponds to the classical Cauchy–Schwarz inequality, whereas the inequality \eqref{reversed cauchy schwarz} can be viewed as a reverse-type Cauchy–Schwarz inequality.

\begin{theorem}\label{inequality polydouble gamma thm}
Let the function $G_n(x; r)$ is defined by
\begin{align} \label{G_n(x;r) equation}
G_n(x; r) = \left((-1)^{n+1} \psi_2^{(n)}(x)\right)^r, \quad x \in (0, \infty), \quad n \geq 3,
\end{align}
where $\psi_2^{(n)}(x)$ denotes the poly-double gamma function given in \eqref{poly-double gamma function series form}. Then the following statements hold.
\begin{enumerate}
\rm \item \label{convexity} The function $G_n(x; r)$ is strictly convex for $r \in \left(-\infty, -\dfrac{1}{n-1}\right) \cup (0, \infty)$ and strictly concave for $r \in \left(-\dfrac{1}{n+1}, 0\right)$.
\rm \item \label{subadditivity} For $r \in \left(-\infty, -\dfrac{1}{n-1}\right)$,
\begin{align}\label{subadditivity equation}
\left((-1)^{n+1} \psi_2^{(n)}(x)\right)^r+ \left((-1)^{n+1} \psi_2^{(n)}(x)\right)^r<  \left((-1)^{n+1} \psi_2^{(n)}(x+y)\right)^r.
\end{align}
\rm \item \label{superadditivity} For $r \in \left(-\dfrac{1}{n+1}, 0\right)$,
\begin{align}\label{superadditivity equation}
\left((-1)^{n+1} \psi_2^{(n)}(x)\right)^r+ \left((-1)^{n+1} \psi_2^{(n)}(x)\right)^r > \left((-1)^{n+1} \psi_2^{(n)}(x+y)\right)^r.
\end{align}
\end{enumerate}
\end{theorem}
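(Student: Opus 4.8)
The plan is to reduce every assertion to the sign of the second derivative of $G_n(x;r)$ together with the boundary behaviour of $\phi(x):=(-1)^{n+1}\psi_2^{(n)}(x)$. By Theorem \ref{poly double cm thm}, $\phi$ is completely monotone on $(0,\infty)$, so $\phi>0$ and $\phi''>0$; moreover $\phi(x)=n!\sum_{k\ge 0}(1+k)/(x+k)^{n+1}\to+\infty$ as $x\to 0^{+}$ and $\phi(x)\to 0$ as $x\to\infty$. Writing $G_n(x;r)=\phi^{\,r}$, a direct computation gives $G_n''=r\,\phi^{\,r-2}\big[(r-1)(\phi')^2+\phi\phi''\big]=r\,\phi^{\,r-1}\phi''\big[(r-1)\rho+1\big]$, where $\rho(x):=(\phi')^2/(\phi\phi'')$. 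Since $\phi^{\,r-1}\phi''>0$, the sign of $G_n''$ equals that of $r\big[(r-1)\rho+1\big]$, so part \ref{convexity} becomes a purely algebraic sign question once $\rho$ is located.

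The first real step is to pin down $\rho$. Because $\phi'=(-1)^{n+1}\psi_2^{(n+1)}$ and $\phi''=(-1)^{n+1}\psi_2^{(n+2)}$, one has $\rho=(\psi_2^{(n+1)})^2/(\psi_2^{(n)}\psi_2^{(n+2)})$, which is exactly the ratio bounded in Corollary \ref{upper-lower bound polydouble gamma corollary} at index $n+1$; hence $\tfrac{n-1}{n}<\rho(x)<\tfrac{n+1}{n+2}$ for every $x>0$. The unique root in $r$ of $(r-1)\rho+1$ is $r^{\ast}(x)=1-1/\rho(x)$, and the bounds on $\rho$ translate precisely into $r^{\ast}(x)\in\big(-\tfrac1{n-1},-\tfrac1{n+1}\big)$. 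Since $(r-1)\rho+1$ is increasing in $r$ and $r^{\ast}(x)<0$, a short case check finishes part \ref{convexity}: for $r>0$ one has $r>0>r^{\ast}(x)$, so both factors are positive and $G_n''>0$; for $r<-\tfrac1{n-1}$ one has $r<-\tfrac1{n-1}<r^{\ast}(x)$, so $(r-1)\rho+1<0$ while $r<0$, again giving $G_n''>0$; and for $r\in(-\tfrac1{n+1},0)$ one has $r^{\ast}(x)<-\tfrac1{n+1}<r<0$, so $(r-1)\rho+1>0$ while $r<0$, giving $G_n''<0$. This proves strict convexity on $(-\infty,-\tfrac1{n-1})\cup(0,\infty)$ and strict concavity on $(-\tfrac1{n+1},0)$.

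For parts \ref{subadditivity} and \ref{superadditivity} I would exploit that $r<0$ in both ranges, whence $G_n(x;r)=\phi(x)^{r}\to 0$ as $x\to 0^{+}$; extending $G_n(0;r):=0$ preserves convexity (resp. concavity) on $[0,\infty)$ since the limit is finite. Then I invoke the classical fact that a nonnegative $H$ with $H(0)=0$ is superadditive if convex and subadditive if concave: from convexity and $H(0)=0$ one gets $H(tx)\le tH(x)$ for $t\in[0,1]$, and applying this with $t=x/(x+y)$ and $t=y/(x+y)$ and adding yields $H(x)+H(y)\le H(x+y)$, the inequality being strict under strict convexity. Taking $H=G_n(\cdot;r)$ gives \eqref{subadditivity equation} on $(-\infty,-\tfrac1{n-1})$, and the concave (strictly subadditive) case gives the reversed \eqref{superadditivity equation} on $(-\tfrac1{n+1},0)$.

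The main obstacle is concentrated in the second paragraph: everything hinges on confining $\rho(x)$ to $(\tfrac{n-1}{n},\tfrac{n+1}{n+2})$ uniformly in $x$, which Corollary \ref{upper-lower bound polydouble gamma corollary} supplies at index $n+1$; after that the convexity/concavity dichotomy and the additivity transfer are routine. The only delicate points are the strictness of the inequalities (ensured by the strict bounds of the corollary together with $\phi''>0$) and the admissibility of the continuous extension of $G_n$ to $x=0$, which is valid precisely because the limit is finite and equal to $0$ for $r<0$.
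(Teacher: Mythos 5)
Your proposal is correct and follows essentially the same route as the paper: both reduce part (1) to the sign of $G_n''(x;r)$, which hinges on bounding the ratio $\bigl(\psi_2^{(n+1)}\bigr)^2/\bigl(\psi_2^{(n)}\psi_2^{(n+2)}\bigr)$ via Corollary \ref{upper-lower bound polydouble gamma corollary} applied at index $n+1$, and both obtain parts (2)--(3) from strict convexity/concavity together with $G_n(x;r)\to 0$ as $x\to 0^{+}$. The only cosmetic differences are that you organize the sign analysis through the root $r^{\ast}=1-1/\rho$ rather than the paper's rearranged identity, and you spell out the classical ``convex plus vanishing at the origin implies superadditive'' lemma that the paper merely cites.
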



\begin{proof}
Differentiating $G_n(x;r)$ twice with respect to $x$, we obtain
\begin{align*}
G_n''(x;r) = r \left((-1)^{n+1}\psi_2^{(n)}(x)\right)^{r-2} \left( (r-1)(\psi_2^{(n+1)}(x))^2 + \psi_2^{(n)}(x)\psi_2^{(n+2)}(x) \right).
\end{align*}
Furthermore, we can rearrange this expression as
\begin{align} \label{double derivative of G_n}
\dfrac{1}{r} \left(\psi_2^{(n+1)}(x)\right)^{-2} \left(G_n(x;r)\right)^{-1} \left(G_n(x;r)\right)^{2/r} G_n''(x;r) = (r-1) +  \dfrac{\psi_2^{(n)}(x)\psi_2^{(n+2)}(x)}{\left(\psi_2^{(n+1)}(x)\right)^2}.
\end{align}
To determine the sign of $G_n''(x; r)$, we need to analyze the sign of $\dfrac{1}{r}$ and sign of the right-hand side of \eqref{double derivative of G_n}, as all other terms are positive regardless of the choices of $r$, $x$, and $n$.

Using \eqref{lower-upper bound poly-double gamma function}, the right-hand side of \eqref{double derivative of G_n} satisfies
\begin{align}
(r-1) + \dfrac{\psi_2^{(n)}(x)\psi_2^{(n+2)}(x)}{\left(\psi_2^{(n+1)}(x)\right)^2} &< (r-1) + \dfrac{n}{n-1}, \label{upper bound poly double gamma modified}\\
(r-1) + \dfrac{\psi_2^{(n)}(x)\psi_2^{(n+2)}(x)}{\left(\psi_2^{(n+1)}(x)\right)^2} &> (r-1) + \dfrac{n+2}{n+1}. \label{lower bound poly double gamma modified}
\end{align}
Now, if we choose $r < -\dfrac{1}{n-1}$, then by using \eqref{upper bound poly double gamma modified}, the right-hand side of \eqref{double derivative of G_n} is negative. Since $\dfrac{1}{r}$ is also negative for $r < -\dfrac{1}{n-1}$, it follows that $G_n''(x; r) > 0$, implying that $G_n(x; r)$ is strictly convex when $r < -\dfrac{1}{n-1}$.

Similarly, if we choose $r > -\dfrac{1}{n+1}$, then by \eqref{lower bound poly double gamma modified}, the right-hand side of \eqref{double derivative of G_n} is positive. In this case, the sign of $\dfrac{1}{r}$ depends on $r$: it is negative for $r \in \left(-\dfrac{1}{n+1}, 0\right)$ and positive for $r > 0$. Therefore, $G_n(x; r)$ is strictly concave when $r \in \left(-\dfrac{1}{n+1}, 0\right)$ and convex when $r > 0$, which completes the proof of first part.

Next, note that the strict convexity of a function $f(x)$, together with the condition $\lim_{x \to 0} f(x) = 0$, implies the superadditivity property. Therefore, the functions $G_n(x; r)$ and $-G_n(x; r)$ satisfy the superadditivity property for $r < -\dfrac{1}{n-1}$ and $r \in \left(-\dfrac{1}{n+1}, 0\right)$, respectively. This means,
\begin{align}\label{supperadditivity of G}
G_n(x; r)+G_n(y; r)<G_n(x+y; r), \quad r < -\dfrac{1}{n-1},
\end{align}
and
\begin{align}\label{supperadditivity of -G}
G_n(x; r)+G_n(y; r)>G_n(x+y; r), \quad r \in \left(-\dfrac{1}{n+1}, 0\right).
\end{align}
By substituting the expression for $G_n(x; r)$ from \eqref{G_n(x;r) equation} into \eqref{supperadditivity of G} and \eqref{supperadditivity of -G}, we obtain \eqref{subadditivity equation} and \eqref{superadditivity equation}, thus establishing parts (\ref{subadditivity}) and (\ref{superadditivity}) of the result.
\end{proof}





\section{Concluding remarks}
Motivated by the study of the complete monotonicity of gamma and polygamma functions, as discussed in \cite{Alzer_1998_Inequality polygamma_siam, Berg_2001_cm related to gamma, Ismail_2007_cm of determinants, Pedresan_2009_cm of gamma2 jmaa}, this work explores the complete monotonicity of functions defined in terms of the poly-double gamma function.
By establishing complete monotonicity results for these functions, we derive sharp upper and lower bounds, which further lead to results concerning convexity and certain inequalities for the poly-double gamma function. In the process, we also obtain a few results related to sub (super) additivity.

Since the complete monotonicity of Turan-type functions plays a crucial role in the theory of special functions \cite{Alzer_1998_Inequality polygamma_siam, Liang_cm of polygamma_JIA}, the complete monotonicity of Turan determinants of special functions have also been extensively investigated in the literature; see \cite{Baricz_cm of turan det_constructive_2013, Baricz_cm of turan det_MIA_2014,Ismail_2007_cm of determinants}. In this framework, we observe that the Turan determinant of $\psi_2^{(n)}(x)$ also exhibits the complete monotonicity property, as stated below.

\begin{proposition}\label{cm of determinant of poly double gamma}
Let $D_m(y)$ denote the determinant of order $(m+1)$ whose entries are given by the $\psi_2^{(n)}(y)$, and is defined as follows
\begin{align*}
D_{m+1}(y)
&= \det\! \begin{pmatrix}
\psi_2^{(n)}(y)      & \psi_2^{(n+j)}(y)        & \cdots & \psi_2^{(n+mj)}(y) \\
\psi_2^{(n+j)}(y)    & \psi_2^{(n+2j)}(y)       & \cdots & \psi_2^{(n+(m+1)j)}(y) \\
\psi_2^{(n+2j)}(y)   & \psi_2^{(n+3j)}(y)       & \cdots & \psi_2^{(n+(m+2)j)}(y) \\
\vdots               & \vdots                   &  & \vdots \\
\psi_2^{(n+mj)}(y)   & \psi_2^{(n+(m+1)j)}(y)   & \cdots & \psi_2^{(n+2mj)}(y)
\end{pmatrix}.
\end{align*}
Then, for $n\geq 2$, $(-1)^{(n+1)(m+1)} D_n(y)$ is completely monotonic in $y$ on $(0,\infty)$ for every positive integer $m$ and $j$.
\begin{proof}
This result follows directly from [\cite{Ismail_2007_cm of determinants}, Remark 2.9].
\end{proof}
\end{proposition}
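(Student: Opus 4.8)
The plan is to realise the Hankel determinant $D_{m+1}(y)$ as a single Laplace transform of a nonnegative kernel and then invoke Bernstein's theorem (Theorem \ref{Bernstein theorem}); this is precisely the mechanism underlying [\cite{Ismail_2007_cm of determinants}, Remark 2.9], so the whole task reduces to checking that $\psi_2^{(n)}$ fits that framework. First I would invoke the integral representation from Theorem \ref{poly double cm thm}: for every nonnegative integer $\ell$,
\begin{align*}
(-1)^{n+\ell+1}\psi_2^{(n+\ell)}(y)=\int_0^\infty e^{-yt}\,t^{\,n+\ell}\,w(t)\,dt,\qquad w(t)=\frac{1}{(1-e^{-t})^2}>0,
\end{align*}
so that each entry $\psi_2^{(n+(i+k)j)}(y)$ of the matrix is, up to an explicit sign, a moment of the positive measure $d\rho_y(t)=e^{-yt}t^{\,n}w(t)\,dt$ against the power $t^{(i+k)j}=t^{ij}\,t^{kj}$. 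This exhibits the array as a Gram/Hankel matrix of moments, which is exactly the input required by the cited result.

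Next I would strip the signs and apply the Andr\'eief (Heine) identity. Writing $b_{ik}=(-1)^{n+(i+k)j+1}\psi_2^{(n+(i+k)j)}(y)=\int_0^\infty t^{ij}\,t^{kj}\,d\rho_y(t)$ and taking $\phi_i(t)=t^{ij}$ as both the row and the column system, the identity gives
\begin{align*}
\det\!\big(b_{ik}\big)_{i,k=0}^{m}=\frac{1}{(m+1)!}\int_{(0,\infty)^{m+1}}\Big[\det\big(t_l^{\,ij}\big)_{i,l=0}^{m}\Big]^2\prod_{l=0}^{m}d\rho_y(t_l),
\end{align*}
a manifestly nonnegative multiple integral, since the integrand is a squared generalized Vandermonde determinant. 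The parity factors then have to be transferred back onto the determinant: the common factor $(-1)^{n+1}$ pulls out of all $(m+1)$ columns as $(-1)^{(n+1)(m+1)}$, while the remaining row and column factors $(-1)^{ij}$ and $(-1)^{kj}$ multiply to a perfect square and therefore cancel, yielding $\det(b_{ik})=(-1)^{(n+1)(m+1)}D_{m+1}(y)$.

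Finally I would exploit the separable $y$-dependence. Since $\prod_{l}e^{-yt_l}=e^{-y(t_0+\cdots+t_m)}$, the previous display reads
\begin{align*}
(-1)^{(n+1)(m+1)}D_{m+1}(y)=\frac{1}{(m+1)!}\int_{(0,\infty)^{m+1}}e^{-y\sum_l t_l}\Big[\det\big(t_l^{\,ij}\big)\Big]^2\prod_{l=0}^{m}t_l^{\,n}w(t_l)\,dt_l,
\end{align*}
which is the Laplace transform of a nonnegative measure in $y$, obtained by pushing the measure forward under $(t_0,\dots,t_m)\mapsto\sum_l t_l$. Bernstein's theorem then delivers complete monotonicity on $(0,\infty)$; equivalently, $(-1)^p\tfrac{d^p}{dy^p}$ brings down the factor $(\sum_l t_l)^p\ge 0$, keeping the integrand nonnegative. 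The main obstacle is not analytic but organizational: one must verify absolute convergence of the $(m+1)$-fold integral (so that Andr\'eief's identity and Fubini are legitimate) and track the parity bookkeeping carefully so as to land on exactly $(-1)^{(n+1)(m+1)}$. Both are routine once the representation of Theorem \ref{poly double cm thm} is available, which is why the conclusion can be quoted directly from [\cite{Ismail_2007_cm of determinants}, Remark 2.9].
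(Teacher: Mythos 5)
Your proof is correct, but it takes a genuinely different route from the paper: the paper's entire proof is a one-line citation of \cite{Ismail_2007_cm of determinants}, Remark 2.9, whereas you reconstruct from first principles the mechanism underlying that remark. Concretely, you combine the Laplace representation of Theorem \ref{poly double cm thm} (valid for every order $n+\ell\geq 2$, which is guaranteed here by $n\geq 2$) with the Andr\'eief (Heine) identity to write the sign-corrected Hankel determinant as an $(m+1)$-fold integral of a squared generalized Vandermonde determinant $\bigl[\det(t_l^{\,ij})\bigr]^2$ against the product measure $\prod_l e^{-yt_l}t_l^{\,n}(1-e^{-t_l})^{-2}dt_l$, and then conclude via Bernstein's theorem (Theorem \ref{Bernstein theorem}) after pushing the measure forward under $(t_0,\dots,t_m)\mapsto\sum_l t_l$. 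Your parity bookkeeping is also right: writing the sign of each entry as $(-1)^{n+1}(-1)^{ij}(-1)^{kj}$ amounts to conjugating the matrix by $\mathrm{diag}\bigl((-1)^{ij}\bigr)$, which leaves the determinant unchanged, while the common factor $(-1)^{n+1}$ taken from all $m+1$ columns produces exactly $(-1)^{(n+1)(m+1)}$; the convergence and Fubini issues you flag are indeed routine since $t^n(1-e^{-t})^{-2}\sim t^{n-2}$ near $t=0$ and $n\geq 2$. What your argument buys is self-containedness and an explicit Bernstein measure for $(-1)^{(n+1)(m+1)}D_{m+1}(y)$, which in particular shows this quantity is strictly positive rather than merely nonnegative; what the paper's citation buys is brevity and the full generality of the Ismail--Laforgia framework. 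One cosmetic point worth noting: the proposition's conclusion is stated for $D_n(y)$, evidently a typo for $D_{m+1}(y)$, and your proof correctly addresses the latter.
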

\begin{remark}
If we take $j=1$, and $m=1$, Proposition \ref{cm of determinant of poly double gamma} gives that
\begin{align*}
(-1)^{(n+1)}\left(\psi_2^{(n)}(y)\psi_2^{(n+2)}(y)-(\psi_2^{(n)}(y))^2\right), \quad n\geq 2,
\end{align*}
is completely monotonic on $(0,\infty)$.
\end{remark}

An immediate question that arises is about the spectral behaviour of the matrix corresponding to the determinant given in Proposition \ref{cm of determinant of poly double gamma}


On the other hand, if we compare Theorem 2.1 of \cite{Alzer_1998_Inequality polygamma_siam} with Theorem \ref{complete monotonicity of F(x;omega) theorem}, the differences in parameter conditions between the polygamma and poly-double gamma functions become apparent. For future research, it would be interesting to explore whether the complete monotonicity property can be extended to the logarithmic derivative of triple gamma functions and its higher derivatives. However, challenges are expected in obtaining results for triple gamma functions. Nevertheless, results for the modified expansions similar to the one given in \cite{Das_swami_2017_Pick function} are possible. If so, it would be worthwhile to examine how the corresponding parameter conditions differ from those for the polygamma and poly-double gamma cases.

%
%
%
%

\end{document}